\newtheorem{lemma}{LEMMA}[section]
\newtheorem{proposition}[lemma]{PROPOSITION}
\newtheorem{corollary}[lemma]{COROLLARY}
\newtheorem{theorem}[lemma]{THEOREM}
\newtheorem{remark}[lemma]{REMARK}
\newtheorem{remarks}[lemma]{REMARKS}
\newtheorem{example}[lemma]{EXAMPLE}
\newtheorem{definition}[lemma]{DEFINITION}
\newtheorem{assumption}[lemma]{ASSUMPTION}
\newcommand{\real}{\mathbbm{R}}
\newcommand{\nat}{\mathbbm{N}}
\newcommand{\limn}{\lim_{n \to \infty}}
\newcommand{\sumn}{\sum_{n\ge 1}}
\newcommand{\sumk}{\sum_{k\ge 1}}
\newcommand{\sumnn}{\sum_{n\ge 0}}               
\newcommand{\g}{\gamma}
\newcommand{\vp}{\varphi}
\newcommand{\ve}{\varepsilon}
\newcommand{\reald}{{\real^d}}
\newcommand{\on}{\quad\text{ on }}
\newcommand{\und}{\quad\mbox{ and }\quad}
\newcommand{\ov}{\overline}
\newcommand{\V}{\mathcal V}  
\newcommand{\W}{\mathcal W}  
\newcommand{\R}{\mathcal R}  
\newcommand{\N}{\mathcal N}  
\newcommand{\C}{\mathcal C}  
\newcommand{\F}{\mathcal F}
\newcommand{\B}{\mathcal B}
\newcommand{\M}{\mathcal M}
\newcommand{\U}{{\mathcal U}}
\newcommand{\itemframe}%
{\setlength{\parskip}{10pt}\begin{enumerate} \setlength{\topsep}{10pt}%
\setlength{\itemsep}{15pt}\setlength{\parsep}{5pt}}
\newcommand{\vx}{\ve_x}
\newcommand{\vc}{{V^c}}
\newcommand{\es}{\mathcal E_{\mathfrak X}}           
\newcommand{\bu}{\B^\ast}
\newcommand{\bup}{\widetilde{\B^\ast}}
\date{}
 \title{Nearly hyperharmonic functions are infima\\ of excessive  functions}
\author{Wolfhard Hansen and Ivan Netuka%
\thanks{The research of the second author was supported by
CRC 1283 of the German Research Foundation.}}
\begin{document}

\maketitle

\begin{abstract}
Let $\mathfrak X$ be a Hunt process on a locally compact space $X$ such that the set~$\mathcal E_{\mathfrak X}$ 
of its Borel measurable excessive functions separates points, 
every function in~$\mathcal E_{\mathfrak X}$
is the supremum of its continuous minorants in $\mathcal E_{\mathfrak X}$ and there are strictly positive 
continuous functions $v,w\in\mathcal E_{\mathfrak X}$ such that $v/w$ vanishes at infinity.  

A numerical function
$u\ge 0$ on $X$ is said to be \emph{nearly hyperharmonic},  \hbox{if $\int^\ast u\circ X_{\tau_V}\,dP^x\le u(x)$}
for every $x\in X$ and every  relatively compact open neighborhood $V$ of $x$, where $\tau_V$
denotes the exit time of $V$. For every such function $u$, its  lower semicontinous regularization
$\hat u$ is excessive.

The main purpose of the paper is to give a   short, complete and understandable proof for the statement
that $ u=\inf \{w\in \mathcal E_{\mathfrak X}\colon w\ge u\}$ 
for every Borel measurable nearly hyperharmonic function on $X$. 
Principal 
 novelties of our approach are the following: 

1. A quick reduction to the   special case,  
where starting at  $x\in X$ with $u(x)<\infty$  
 the  expected number of times  the  process $\mathfrak X$ visits  the 
  set of  points $y\in X$, where $\hat u(y):=\liminf_{z\to y} u(z)<u(y)$, is finite.
 
2. The consequent use of (only) the strong Markov property. 

3. The proof of the equality 
$\int u\,d\mu=\inf\{\int w\,d\mu\colon w\in \es,\,w\ge u\}$ not only for
measures $\mu$ satisfying $\int w\,d\mu<\infty$ for some excessive majorant~$w$ of~$u$,   
but also for all finite measures.

At the end, the measurability assumption on $u$ is weakened considerably. 

\bigskip

Keywords: Nearly hyperharmonic function, strongly supermedian function, excessive function, Hunt process,
balayage space. 

AMS Classification: 60J62, 60J45, 31C05, 31D05.

\end{abstract}

\section{Main result}\label{main-result}

Let $X$ be a locally compact space with countable base, let $\B$ denote the $\sigma$-algebra of all
Borel sets in $X$, and let~$\B(X)$, $\C(X)$ respectively be the set of all numerical  functions on $X$
which are Borel measurable, continuous and real respectively. As usual, given a set $\F$ of functions 
on $X$, a superscript ``+'',  a subscript ``$b$'' respectively will indicate that we consider functions in $\F$
which are positive, bounded respectively.
Let $\M(X)$ denote the set of all positive (Radon) measures on $X$.

Let $\mathfrak X=(\Omega, \mathfrak M, \mathfrak M_t, X_t,\theta_t, P^x)$ be a
Hunt process on $X$ (see \cite[p.\,45]{BG}). 
Let $\mathbbm P=(P_t)_{t>0}$ denote the transition semigroup of $\mathfrak X$, that is, 
$P_tf(x)=E^x(f\circ X_t) $ for all  $t>0$, $f\in \B^+(X)$ and $x\in X$.

We assume that the Hunt process $\mathfrak X$ is \emph{nice} in the following sense.
Its set 
\begin{equation}\label{Hunt} 
\es:=\{w\in \B^+(X)\colon \sup_{t>0}P_tw=w\}
\end{equation} 
of (Borel measurable) excessive functions has the following properties:
\begin{itemize} 
\item[\rm (C)] Continuity:
Every $w\in \es$ is the supremum of its minorants in $\es\cap\C(X)$.
\item[\rm (S)] Separation:
$\es$ is  linearly separating, that is, for all $x\ne y$ and  $\g>0$, there exists a function $w\in \es$ 
such that  $w(x)\ne \g w(y)$.
\item[\rm (T)] Transience:
There are  strictly positive functions $v,w\in\es\cap \C(X)$ such that the quotient  $v/w$ 
tends to $0$  at~infinity.
\end{itemize} 

Let us observe that (C) trivially holds if the kernels $P_t$, $t>0$, or  at least 
the corresponding resolvent kernels $V_\lambda:=\int_0^\infty e^{-\lambda t}P_t\,dt$, $\lambda>0$, 
  are strong Feller, that is,
 map $\B_b(X)$ into $\C_b(X)$. 

For every set $A$ in $X$,   the first entry time $D_A$ and   the first hitting time $T_A$
are defined for $\omega\in\Omega$ by
\begin{equation*}
 D_A(\omega):=\inf\{s\ge 0\colon X_s(\omega)\in A\} \und
        T_A(\omega):=\inf\{s> 0\colon X_s(\omega)\in A\}.
\end{equation*} 
 Let $\U_c$ be the set of all relatively compact open sets $V$ in $X$, $V^c:=X\setminus V$.
A~numerical function $u\ge 0$ is called \emph{nearly hyperharmonic} if          
\begin{equation}\label{def-nearly}
                  \int^\ast u\circ X_{D_\vc} \,dP^x\le u(x) \quad \mbox { for all $ x\in X$ and
                    neighborhoods $V\in \U_c$ of $x$}.
\end{equation} 

Clearly, the set $\N$ (denoted by $\N^+$ in \cite{HN-mertens}) of such functions 
is a convex cone which contains $\es$ and is stable under increasing limits and 
\emph{arbitrary} infima. Moreover, it contains \emph{every} numerical function $u\ge 0$   
which vanishes outside a set~$E$ which is polar, that is, satisfies $T_E=\infty$ almost surely. 
For space-time Brownian motion on $\reald\times \real$, 
every function $u\colon \reald\times \real\to [0,\infty]$ satisfying $u(x,t)\le u(x',t')$,
whenever $t\le t'$, is nearly hyperharmonic.

The   purpose of this paper is to give a short,  complete and understandable proof 
for the following statement (where the implications (3)\,$\Rightarrow$\,(2)\,$\Rightarrow$\,(1)
hold trivially). 

\begin{theorem}\label{main}
For every $u\in \B^+(X)$  the following statements are equivalent:
\begin{itemize}
\item[\rm (1)] 
The function $u$ is nearly hyperharmonic.
\item[\rm (2)]
The function $u$ is the infimum of its excessive majorants.
\item[\rm (3)] For all $\mu\in \M(X)$  
such that   $\mu(A) +\int_{X\setminus A}  w\,d\mu<\infty$   
for some $A\in\B$ and majorant $w\in\es$ of $u$, 
\begin{equation}\label{int-inf}
     \int u\,d\mu=\inf\{\int w\,d\mu\colon w\in\es, \ w\ge u\}.
\end{equation}  
\end{itemize} 
In particular, for every $\vp\in\B^+(X)$, 
the function $R_\vp:=\inf\{w\in\es\colon w\ge \vp\}$ 
is the smallest nearly hyperharmonic majorant of $\vp$.
\end{theorem}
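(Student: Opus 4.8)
Only the implication (1)\,$\Rightarrow$\,(3) carries content. Indeed (3)\,$\Rightarrow$\,(2) is obtained by taking $\mu=\vx$ and $A=\{x\}$, so that $\mu(A)=1<\infty$ and $\int_{X\sms A}w\,d\mu=0$, which turns \eqref{int-inf} into $u(x)=\inf\{w(x)\colon w\in\es,\ w\ge u\}$; and (2)\,$\Rightarrow$\,(1) holds because $\es\subseteq\N$ and $\N$ is stable under arbitrary infima. The final assertion then follows at once from the equivalence (1)\,$\Leftrightarrow$\,(2): the function $R_\vp$ is an infimum of excessive, hence nearly hyperharmonic, functions, so $R_\vp\in\N$; it majorizes $\vp$; and if $\psi\in\N$ satisfies $\psi\ge\vp$, then writing $\psi=\inf\{w\in\es\colon w\ge\psi\}$ and noting that every such competitor also dominates $\vp$, we get $\psi\ge R_\vp$.

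So fix a nearly hyperharmonic $u\in\B^+(X)$ and a measure $\mu$ as in (3). Since $\int u\,d\mu\le\int w\,d\mu$ for every excessive $w\ge u$, I must only produce, for each $\ve>0$, one $w\in\es$ with $w\ge u$ and $\int w\,d\mu\le\int u\,d\mu+\ve$. I start from the given fact that $\hat u\in\es$ and $\hat u\le u$, so the \emph{excess} $g:=u-\hat u\ge0$ is carried by the bad set $B:=\{\hat u<u\}$. The first step (Novelty~1) is to reduce to the case in which the expected number of visits of $\mathfrak X$ to $B$ is finite whenever $u(x)<\infty$. I would reach this by truncating, $u\wedge n\in\N$, and by exhausting $B$ through the sets $\{g\ge\d\}$: along the visits to $\{g\ge\d\}$ the inequality \eqref{def-nearly}, applied successively between consecutive visits, forces an expected decrease of order $\d$ per visit measured against the finite quantity $u(x)$, bounding the expected number of such visits by $u(x)/\d$; letting $\d\downarrow0$ and $n\uparrow\infty$ and using monotone convergence in the identity \eqref{int-inf} returns the general case.

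In the reduced situation the natural candidate is built by the strong Markov property alone (Novelty~2). Writing $T_0\le T_1\le\cdots$ for the successive hitting times of $B$, the function $p(y):=E^y\bigl[\sum\nolimits_{m\ge0}g(X_{T_m})\bigr]$ is finite, is excessive as the potential of an additive functional, and satisfies $\hat u+p\ge u$, since the term $m=0$ already contributes $g$ on $B$. This settles the \emph{qualitative} claim that excessive majorants of $u$ exist and can even be written down explicitly. The genuine difficulty is the \emph{quantitative} one: this $p$ overshoots, because it pays the excess at \emph{every} future visit, so $\int p\,d\mu$ need not be close to $\int g\,d\mu$. The heart of the argument is therefore to refine this bookkeeping---discarding all but the visits that $\mu$ actually ``sees'', and calibrating the contribution of each remaining visit by a potential tuned to be tight there---so that the resulting excessive majorant $w$ obeys $\int w\,d\mu\le\int u\,d\mu+\ve$. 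The passage from measures of finite excessive energy to \emph{all} finite $\mu$ (Novelty~3) is then accomplished by splitting $\mu$ along the set $A$ of the hypothesis: on $A$ one exploits $\mu(A)<\infty$, on $X\sms A$ the finiteness of $\int_{X\sms A}w_0\,d\mu$ for some excessive majorant $w_0$ of $u$.

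The principal obstacle, as indicated, is the quantitative control of the integral of the majorant rather than its mere existence; equivalently, for $\mu=\vx$ with $x\in B$ it is the construction of excessive functions $\ge u$ that are \emph{tight} at the irregular point $x$, where $u$ exceeds its regularization. I expect the transience hypothesis~(T) to be indispensable here, guaranteeing that the réduites and excess potentials in play are true potentials (vanishing at infinity) so that the visit-by-visit accounting converges, and the continuity hypothesis~(C) to enter when one must replace an infimum of excessive functions by continuous excessive minorants in order to exchange infimum and integral.
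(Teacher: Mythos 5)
The decisive step of your plan is false, and it fails at exactly the point the theorem is designed to overcome: the function $p(y):=E^y\bigl[\sum_{m\ge 0}g(X_{T_m})\bigr]$ is \emph{not} excessive. Up to the cutoff $1_U$, $U=\{u<\infty\}$, your $p$ is the paper's function $g=P^F(1_Uu-1_U\hat u)$ of Section~\ref{sec-special}, and Theorem~\ref{main-special} shows $g-\hat g=u-\hat u$ on $U$: the sum over visits inherits precisely the defect of fine upper semicontinuity that $u$ has on $B=F=\{\hat u<u\}$, because the $m=0$ term (the one you need for $\hat u+p\ge u$ on $B$) is collected at time $0$, while excessiveness only tolerates mass collected at strictly positive times. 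This dichotomy is exactly Proposition~\ref{Rvav}: $v:=P^A1$ satisfies $\hat v+1_A=v$, so the honest ``potential of the additive functional'' is the excessive function $\hat v$, which does \emph{not} dominate $1_A$ on $A$, whereas $v$, which does, is not excessive. Concretely, for space-time Brownian motion take $u:=1_{H_0}+1_{\reald\times(0,\infty)}$ with $H_0=\reald\times\{0\}$ totally thin (Example~\ref{hyperplanes}); then $u\in\N$, $B=H_0$, $g=1_{H_0}$, and $p=P_{H_0}1=1_{\reald\times[0,\infty)}=u$, while $\sup_{t>0}P_tp=1_{\reald\times(0,\infty)}\ne p$. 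Moreover $R_u=u\notin\es$ by (\ref{Rvv}), so this $u$ has \emph{no} minimal excessive majorant at all: the infimum in (2) is genuinely unattained, and no construction of a single explicit excessive majorant of the form ``$\hat u$ plus a potential'' can ever prove (2). Your subsequent program (start from the excessive majorant $\hat u+p$ and make it tight) therefore starts from an object that does not exist.

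What replaces it in the paper --- and what is missing from your proposal --- is this: $g$ coincides on $U$ with an element of $\R(F)$, a countable sum of reduites $P_Bw$, $B\subset F$, $w\in\es$ bounded (Theorem~\ref{main-special}); each reduite satisfies $R_{P_Bw}=P_Bw$ by (\ref{Rvv}), Lemma~\ref{sums} (the $2^{-n}\ve$ bookkeeping) propagates pointwise tightness of the infimum through countable sums, and for statement (3) with a general finite $\mu$ one needs in addition the approximation results of Section~\ref{remaining} (Lemma~\ref{mu-K-proposition}, Proposition~\ref{mu-proposition}, Corollary~\ref{NRmu}, resting on Choquet's topological lemma, harmonicity of $P_{V_n}w$ off compacts, and semipolarity of $\{\hat u<u\}$). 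Your sentence about ``discarding all but the visits that $\mu$ sees'' and ``calibrating by a potential tuned to be tight'' is a declaration of intent for exactly this work, not an argument. Three further gaps: (a) your visit-counting reduction cannot be run ``by applying (\ref{def-nearly}) successively,'' since (\ref{def-nearly}) concerns only exit times of relatively compact open sets; between consecutive visits to the Borel set $\{u-\hat u\ge\delta\}$ you need $E^y(u\circ X_{T_F})\le\hat u(y)$ at points $y$ of $F$ itself, which is Lemma~\ref{PFuh} and requires a genuine proof (granted that lemma, your additive cut is a workable alternative to the paper's multiplicative cut $F=\{\hat u<\eta u\}$ with its geometric decay $(\hat P_F)^n\hat v\le\eta^n\hat v$ in Proposition~\ref{key-reduction}); (b) you never address the set $\{u=\infty\}$, which the paper must control through the finiteness property (FP) and the bootstrap Corollary~\ref{u-infty-infty}, itself an application of the theorem to $1_{\{u=\infty\}}$; (c) in the final assertion you apply (2) to an arbitrary nearly hyperharmonic majorant $\psi$ of $\vp$, but (2) is only available for Borel measurable functions and $\psi$ need not be one; the paper instead applies (2) to the single function $N_\vp=\vp\vee\hat N_\vp$, which is Borel by \cite[Proposition 2.4]{HN-mertens}.
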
 
  
\begin{remark}\label{KK}
{\rm 
Of course, (\ref{int-inf}) trivially holds if $\int u\,d\mu=\infty$ (we take   $w=\infty $).
Since $\es$ is $\wedge$-stable, the set of all  $\mu\in \M(X)$
satisfying (\ref{int-inf})  is a convex cone.
 If~$\ve>0$ 
and $A\in \B$ such that
 $\mu(A)+\int_{X\setminus A} w\,d\mu<\infty$,   
there is a union $A'$ of $A$ with a  compact in $X\setminus A$ such that $\mu(A')<\infty$ and $\int_{X\setminus A'} w\,d\mu<\ve$.
}
\end{remark}

In fact, we shall finally prove that, for functions $u\colon X\to [0,\infty]$,
 the  equivalence (1)\,$\Leftrightarrow$\,(3) 
already holds if $u$ is nearly Borel measurable (Theorem \ref{nearly-Borel}) and that
(1)\,$\Leftrightarrow$\,(2) even holds if $u$ is only supposed  
to be equal to a Borel measurable function 
outside a polar set (Theorem \ref{tilde-Borel}). 
Moreover, assuming that $u$ is nearly hyperharmonic and equal to a universally 
 measurable function outside a polar set,
we characterize the validity of (2) in various ways
(Corollary~\ref{general-2}).

Analogous statements can be found for different settings and functions, which there are called 
\emph{strongly supermedian},  
 in \cite{mertens,feyel-rep,
feyel-fine, beznea-boboc-feyel, beznea-boboc-book}, 
but the proofs given therein seem to be either incomprehensible 
or incomplete  (see \cite{mertens, feyel-rep,feyel-fine}) or, as in  \cite{beznea-boboc-feyel} and 
\hbox{\cite[Section 4] {beznea-boboc-book}},
very long and delicate.

The main novelties of our approach are 
\begin{itemize}
\item  
the insight that  for a proof of   inequalities $u\ge \eta \inf\{w\in\es\colon w\ge u\} $ for  
$\eta\in (0,1)$ it suffices to consider the
special case, where starting in~$\{u<\infty\}$ 
  the expected number of times
  the process $\mathfrak X$ visits
 the  set~of  points~$y\in X$, where $\hat u(y):=\liminf_{z\to y} u(z)<u(y)$,
is finite.
\item
the consequent use of (only) the strong Markov property,  
\item
the verification of the equalities in (2) and (3) first for nearly hyperharmonic functions
 $u\in\B^+(X)$ having a certain finiteness property, which then implies~(!) that    
\emph{every} nearly hyperharmonic   $u\in\B^+(X)$
has this property,
\item
the equality  (\ref{int-inf})  not only for 
measures $\mu$ satisfying $\int w\,d\mu<\infty$ for some excessive majorant $w$ of $u$, 
but also for all finite measures~$\mu$.

\end{itemize}

Let us observe that the additional statement 
 in Theorem \ref{main} is not only of interest 
in its own right, 
but also because of the following consequence (see \cite[Propositions~2.4, 2.5 and Theorem 3.1]{HN-mertens}).

\begin{corollary}\label{cons}
Let $\vp\in\B^+(X)$. Then $R_\vp=\vp\vee \widehat{R_\vp}\in \B^+(X)$ and
\begin{equation*}
        R_\vp(x)=\sup\{\int \vp\circ X_{D_\vc}\,dP^x\colon x\in V\in \U_c\}, \qquad x\in X.
\end{equation*} 
\end{corollary}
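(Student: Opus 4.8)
The plan is to derive the whole statement from the final clause of Theorem~\ref{main} together with the structural description of smallest nearly hyperharmonic majorants obtained in \cite{HN-mertens}. By that clause, $R_\vp=\inf\{w\in\es\colon w\ge\vp\}$ is the smallest nearly hyperharmonic majorant of $\vp$. This identification of the excessive réduite with the smallest element of $\N$ dominating $\vp$ is the only genuinely new input; everything else is an intrinsic property of the smallest nearly hyperharmonic majorant, and it is precisely this identification that allows \cite[Propositions~2.4, 2.5 and Theorem~3.1]{HN-mertens}, which are phrased for the smallest nearly hyperharmonic majorant, to be applied to $R_\vp$.

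For the decomposition and the measurability I would argue as follows. Since $R_\vp\in\N$, the regularization statement recalled in the introduction shows that its lower semicontinuous regularization $\widehat{R_\vp}$ is excessive, so $\widehat{R_\vp}\in\es\subseteq\B^+(X)$; moreover $\widehat{R_\vp}\le R_\vp$. Trivially $R_\vp\ge\vp$, whence $R_\vp\ge\vp\vee\widehat{R_\vp}$. For the reverse inequality I would verify that $\vp\vee\widehat{R_\vp}$ is itself nearly hyperharmonic: it majorizes $\vp$, it coincides with the excessive function $\widehat{R_\vp}$ off the set $\{\widehat{R_\vp}<\vp\}$, and on that set it carries the value $\vp$, so that the defining inequality (\ref{def-nearly}) for $\vp\vee\widehat{R_\vp}$ reduces to that for the excessive $\widehat{R_\vp}$ via
\[
\int^\ast(\vp\vee\widehat{R_\vp})\circ X_{D_\vc}\,dP^x\le\int^\ast\widehat{R_\vp}\circ X_{D_\vc}\,dP^x\le\widehat{R_\vp}(x)\le(\vp\vee\widehat{R_\vp})(x).
\]
Minimality of $R_\vp$ in $\N$ then forces $R_\vp\le\vp\vee\widehat{R_\vp}$, giving $R_\vp=\vp\vee\widehat{R_\vp}$; since $\vp,\widehat{R_\vp}\in\B^+(X)$, also $R_\vp\in\B^+(X)$.

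Finally, for the exit-time formula set
\[
q(x):=\sup\{\,\int\vp\circ X_{D_\vc}\,dP^x\colon x\in V\in\U_c\,\}.
\]
The inequality $q\le R_\vp$ is immediate: for $x\in V\in\U_c$, using $R_\vp\ge\vp$ and the fact that $R_\vp$ is nearly hyperharmonic one gets $\int\vp\circ X_{D_\vc}\,dP^x\le\int^\ast R_\vp\circ X_{D_\vc}\,dP^x\le R_\vp(x)$, and taking the supremum over $V$ yields $q\le R_\vp$. The reverse inequality is the main obstacle, and it is where the strong Markov property enters exactly as in the body of the paper: given $x\in W\in\U_c$ and a relatively compact neighborhood $V$ of a point $y\in W^c$, the strong Markov property at the exit time $D_{W^c}$ lets one splice the exit distribution from $W$ with the exit distribution from $V$, realizing the chained integrals $\int\vp\circ X_{D_\vc}\,dP^y$ as an exit distribution from a single smaller relatively compact open set; this shows $q$ is nearly hyperharmonic and dominates the excessive part $\widehat{R_\vp}$, so that minimality together with the decomposition $R_\vp=\vp\vee\widehat{R_\vp}$ closes the argument. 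Controlling the upper integral $\int^\ast$ and the measurability of $y\mapsto\int\vp\circ X_{D_\vc}\,dP^y$ throughout this splicing is the delicate technical point; it is precisely the content of \cite[Theorem~3.1]{HN-mertens} (with Propositions~2.4 and~2.5 there), which I would invoke, the present Theorem~\ref{main} supplying the identification that makes those results applicable to $R_\vp$.
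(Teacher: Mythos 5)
Your overall plan coincides with the paper's: the corollary is obtained by combining the additional statement of Theorem~\ref{main} (which identifies $R_\vp$ with the smallest nearly hyperharmonic majorant $N_\vp$ of $\vp$) with the properties of $N_\vp$ established in \cite[Propositions 2.4, 2.5 and Theorem 3.1]{HN-mertens}; in particular, deferring the exit-time formula to \cite[Theorem 3.1]{HN-mertens} is exactly what the paper does (your ``splicing'' sketch is not a proof, but you do not rely on it). The genuine gap is in the one step you prove yourself, namely that $\vp\vee\widehat{R_\vp}$ is nearly hyperharmonic. The first inequality of your displayed chain,
\begin{equation*}
\int^\ast(\vp\vee\widehat{R_\vp})\circ X_{D_\vc}\,dP^x\le\int^\ast\widehat{R_\vp}\circ X_{D_\vc}\,dP^x ,
\end{equation*}
requires $\vp\le\widehat{R_\vp}$ at the exit position $P^x$-a.s. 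This fails in general: the set $\{\vp>\widehat{R_\vp}\}$ is contained in $\{\widehat{R_\vp}<R_\vp\}$, which is only \emph{semipolar}, not polar, and the exit measures $P_\vc(x,\cdot)$ can charge semipolar sets. Concretely, for space-time Brownian motion (time decreasing along paths) take $\vp:=1_K$ with $K:=\ov{B(0,1)}\times\{0\}$. Then $R_\vp=P_K1$ and $\widehat{R_\vp}=\hat P_K1=0$ on $K$, since the process started on $K$ immediately leaves the time level $0$ and never returns. For $x=(0,1)$ and $V:=B(0,2)\times(0,2)$, the process exits $V$ through the bottom at a point of $K$ with positive probability, and at such points $\vp\vee\widehat{R_\vp}=1>0=\widehat{R_\vp}$; so your left-hand side exceeds your right-hand side by exactly $P^x[X_{D_\vc}\in K]>0$. (The conclusion $\vp\vee\widehat{R_\vp}\in\N$ is still true here, since $1_K\vee\hat P_K1=P_K1$; it is only your argument that breaks.)

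The repair is to let the regularization enter at the starting point rather than at the exit position: bound the integrand by the majorant $R_\vp$ itself and use the analogue of (\ref{nearly-used-l}) (that is, \cite[(2.3)]{HN-mertens}). For $x\in V\in\U_c$,
\begin{equation*}
\int^\ast(\vp\vee\widehat{R_\vp})\circ X_{D_\vc}\,dP^x\le\int^\ast R_\vp\circ X_{D_\vc}\,dP^x\le\widehat{R_\vp}(x)\le(\vp\vee\widehat{R_\vp})(x),
\end{equation*}
where the middle inequality holds because $P_\vc R_\vp\le R_\vp$ everywhere while $P_\vc R_\vp$ is lower semicontinuous on $V$, whence $P_\vc R_\vp(y)\le\liminf_{z\to y}P_\vc R_\vp(z)\le\liminf_{z\to y}R_\vp(z)=\widehat{R_\vp}(y)$ for $y\in V$; then minimality of $R_\vp$ in $\N$ closes the argument as you intended. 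Note also a point you pass over: a priori $R_\vp$ is an uncountable infimum and not known to be Borel measurable, which is one more reason the paper routes everything through $N_\vp$ and simply cites \cite[Proposition 2.4]{HN-mertens} (where these issues are handled) instead of reproving the decomposition.
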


In Section \ref{sec-prelim} we discuss the close relationship between nice Hunt processes and balayage spaces
and  establish a~crucial inequality for nearly hyperharmonic functions (Lemma \ref{PFuh}).
In Section \ref{sec-special}  
we treat   the special case indicated above. 
In Section~\ref{sec-general} we shall see very quickly that the equality $R_u=u$ for arbitrary nearly hyperharmonic functions 
$u\in \B^+(X)$  is a~consequence of our result for the special case and yields the additional statement in Theorem \ref{main}. 
The implication (1)\,$\Rightarrow$\,(3)  
is derived   in Section~\ref{remaining}, and  
 in Section \ref{weaker} we present our results under weaker measurability assumptions.
 In Section \ref{standard} we briefly indicate the use of our approach in the
 general setting of right processes.

\section{Preliminaries}\label{sec-prelim}

Let us first recall the following. Let $\W$ be any
convex cone  of positive numerical functions on $X$ having the properties stated in  (C), (S)
and  (T) for $\es$ (so that every function 
in $\W$ is lower semicontinuous). 
The ($\W$-)\emph{fine topology} on $X$ is the coarsest topology on $X$ which is at least as fine
as the initial topology and such that every function in $\W$ is continuous. Given  
$\vp\colon X\to [0,\infty] $, 
 let $\hat \vp$, $\hat \vp^f$ resp.\ denote the largest lower semicontinuous,
finely lower semicontinuous resp.\ minorant of~$\vp$.

Then  $(X,\W)$ is called a \emph{balayage space} provided the following hold
(see \cite{BH,H-course} and \cite[Appendix 8.1]{HN-unavoidable-hameasures}): 
\begin{itemize} 
\item[\rm (i)]
If $v_n\in \W$, $v_n\uparrow v$, then $v\in \W$.
\item[\rm (ii)]
If $\V\subset \W$, then $\widehat{\inf \V}^f\in\W$. 
\item[\rm (iii)]
If $u,v',v''\in\W$,  $u\le v'+v''$, then  there exist $u',u''\in\W$ such that  $u=u'+u''$ and $u'\le v'$,  $u''\le v''$. 
\end{itemize}

By \cite[II.4.9]{BH} (see also \cite[Corollary 2.3.8]{H-course}), for our nice Hunt process
$\mathfrak X$,  the pair $(X,\es)$ is a balayage space (of course, $\lim_{t\to 0} P_tf=f$ for
every $f\in \C_b(X)$ by right continuity of the paths).  
So we may use results obtained in \cite{BH} and in the recent paper \cite{HN-mertens}.

\begin{remarks} {\rm
1. We note that, conversely, for every balayage space $(X,\W)$ with $1\in\W$, there exists a~corresponding 
nice Hunt process (see \cite[IV.8.1]{BH}). For that matter, the condition \hbox{$1\in\W$} is not really restrictive
since, given any balayage space $(X,\W)$, the standard normalization $\widetilde \W:=(1/\tilde w)\W$ with 
any strictly positive $\tilde w\in \W\cap \C(X)$ leads to a~balayage space~$(X,\widetilde \W)$
with $1\in \widetilde \W$.

2. A characterization by harmonic kernels reveals that the notion of a balayage space generalizes the 
notion of a $\mathcal P$-harmonic space. Therefore the theory of balayage spaces is known to 
cover the potential theory for very general partial  differential operators of second order
(see, for instance,  \cite{GH1}). 
}
\end{remarks}
 
Of course, for any numerical function $\vp\ge 0$ on $X$,   
\begin{equation*}
                         R_\vp:=\inf\{w\in\es\colon w\ge \vp\} \in \N,
\end{equation*} 
and, by property (ii) of  balayage spaces, 
\begin{equation}\label{red-es} 
                              \hat R_\vp:=\widehat{R_\vp}=\widehat{R_\vp}^f\in \es.
\end{equation} 
By \cite[Proposition 2.2 and p.\,6]{HN-mertens}, we know even that   $\hat u=\hat u^f\in \es$ for all $u\in \N$.


We recall that, for  arbitrary subsets $A$ of $X$ and $w\in \es$, 
\begin{equation*} 
                                R_w^A:=R_{1_Aw}=\inf\{v \in \es\colon v\ge w \mbox{ on }A\} \und 
                                 \hat R_w^A:=\widehat{R_w^A}, 
\end{equation*} 
leading to measures $\ve_x^A$ and  $\hat\ve_x^A$ on $X$ which are  characterized by
\begin{equation*}
               \int w \,d\ve_x^A=R_w^A (x) \und                \int w \,d\hat \ve_x^A=\hat R_w^A (x), \qquad w\in \es;
\end{equation*}
 see \cite[II.4.3, II.5.4 and VI.2.1]{BH} 
 (in \cite{BH} these measures are denoted by~$\overset\circ  \ve{}_x^A$ and $\vx^A$). 
For later use,  let us observe that trivially
\begin{equation}\label{Rvv}
  R_v=v, \quad\mbox{whenever $v=R_w^A$,  $w\in \es$}.
  \end{equation}          

 Given a stopping time $T$, we define 
as usual\footnote{We tacitly assume
 that we have an isolated point $\Delta$ added to $X$, that functions on $X$ are
identified with functions on $X_\Delta:=X\cup\{\Delta\}$ vanishing at $\Delta$ and that $X_t\colon 
[0,\infty] \to X_\Delta$ with $X_\infty=\Delta$ and $X_t(\omega)=\Delta$, whenever $t\ge s$ and
$X_s(\omega)=\Delta$.}
\begin{equation*}
P_Tf(x):=E^x(f\circ X_T) \quad\mbox{ for all $f\in \B^+(X)$ and $x\in X$}.
\end{equation*} 
Suppose for the moment that $A\in \B$. Then, by \cite[VI.3.14]{BH},
 both $D_A$ and $T_A$ are stopping times
 and, 
 writing $P_A$ instead of $P_{D_A}$ and $\hat P_A$ instead of  $P_{T_A}$,   
\begin{equation}\label{PPhat}
P_Aw =R_w^A \und 
 \hat P_A w =\hat R_w^A
\end{equation}
for every $w\in\es$  (cf. \cite[6.12]{BG}),  
where obviously   
\begin{equation}\label{PDT}
  \mbox{$P_Aw=w$ on $A$} \und \mbox{$P_Aw=\hat P_Aw$ on $X\setminus A$.}
  \end{equation} 
  By (\ref{PPhat}) and \cite[Section II]{BH},  
                             $P_Af, \hat P_Af \in \B(X) $  for all $f\in \B^+(X)$. 
                       Hence $P_A$ and~$\hat P_A$ are kernels on $X$. 
   Moreover,   
  \begin{equation*} 
P_A(x,B)=\ve_x^A(B) \und \hat P_A(x,B)=\hat \ve_x^A(B)
\end{equation*} 
 for all $x\in X$ and $B\in \B$. In particular, our definition of nearly hyperharmonic functions by (\ref{def-nearly})
coincides with the definition given by \cite[(2.2)]{HN-mertens}.

The following  simple stability result will be useful.  

\begin{lemma}\label{sums}
For every $\mu\in\M(X)$, the set $\F$   of all functions $f\in\B^+(X)$     
such that $\int f\,d\mu=\inf\{\int w\,d\mu\colon w\in\es,\, w\ge f\}$
is a convex cone which is closed under countable sums.
\end{lemma}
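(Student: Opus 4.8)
The plan is to exploit the fact that for every $f\in\B^+(X)$ one trivially has $\int f\,d\mu\le \inf\{\int w\,d\mu\colon w\in\es,\,w\ge f\}$, since $w\ge f$ forces $\int w\,d\mu\ge \int f\,d\mu$. Hence $f\in\F$ means precisely that this inequality is an equality, and the whole task reduces to producing, for each $\ve>0$, an excessive majorant of $f$ whose integral exceeds $\int f\,d\mu$ by at most $\ve$. The two structural facts I would rely on are that $\es$ is a convex cone and that it is closed under increasing limits (property (i) of balayage spaces). Together these show that $\es$ is in fact closed under countable sums: if $w_n\in\es$, the partial sums $\sum_{n\le N}w_n$ lie in $\es$ and increase to $\sum_{n\ge 1}w_n$, which therefore belongs to $\es$ as well.

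For the cone property I would first treat scalar multiples. For $c>0$ the map $w\mapsto (1/c)w$ is an order-compatible bijection of $\es$, so $\inf\{\int w\,d\mu\colon w\in\es,\,w\ge cf\}=c\inf\{\int w\,d\mu\colon w\in\es,\,w\ge f\}$, which together with $\int cf\,d\mu=c\int f\,d\mu$ gives $cf\in\F$ whenever $f\in\F$; the case $c=0$ is trivial as $0\in\es$. Since finite sums are special cases of countable sums, additivity will follow from the countable-sum statement, so the real content is closure under countable sums.

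For closure under countable sums, let $f_n\in\F$ and set $f:=\sum_{n\ge1}f_n$. If $\int f\,d\mu=\infty$ then $f\in\F$ holds trivially (with the convention of Remark \ref{KK}). Otherwise $\sum_{n}\int f_n\,d\mu=\int f\,d\mu<\infty$ by monotone convergence, so each $\int f_n\,d\mu$ is finite and equals $\inf\{\int w\,d\mu\colon w\in\es,\,w\ge f_n\}$. Fixing $\ve>0$, for each $n$ I would choose $w_n\in\es$ with $w_n\ge f_n$ and $\int w_n\,d\mu\le \int f_n\,d\mu+\ve 2^{-n}$. Then $w:=\sum_{n}w_n\in\es$ by the closure remark above, $w\ge f$, and $\int w\,d\mu=\sum_{n}\int w_n\,d\mu\le \int f\,d\mu+\ve$. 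Letting $\ve\to0$ yields $\inf\{\int w\,d\mu\colon w\in\es,\,w\ge f\}\le\int f\,d\mu$, which combined with the trivial reverse inequality gives $f\in\F$.

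I do not anticipate a genuine obstacle here. The only points requiring care are the bookkeeping of the two-sided inequality (the direction $\int f\,d\mu\le\inf(\dots)$ is automatic and must not be conflated with the nontrivial direction) and the justification that $\es$ is closed under countable sums. The latter is immediate from the cone structure of $\es=\W$ together with stability under increasing limits, and it is the one place where the underlying balayage-space structure is actually invoked.
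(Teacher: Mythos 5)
Your proof is correct and follows essentially the same route as the paper: the paper's argument is exactly the $2^{-n}\ve$-splitting you describe (choose $w_n\in\es$ with $w_n\ge f_n$ and $\int w_n\,d\mu<\int f_n\,d\mu+2^{-n}\ve$, then sum), with the infinite-integral case dismissed as trivial and scalar multiples handled as obvious. Your explicit justification that $\es$ is closed under countable sums (via the cone structure and stability under increasing limits) is a point the paper leaves unstated, but it is the same argument.
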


\begin{proof} Of course, $0\in \F$ and  $a f\in\F$ for all $a> 0$ and $f\in \F$.
Let $(f_n)$ be a~sequence in~$\F$ and $f:=\sumn  f_n$ such that  $\int f\,d\mu<\infty$.
 Given $\ve>0$, we may choose  $w_n\in\es$,
$n\in\nat$, such that $\int w_n\,d\mu<\int f_n\,d\mu+ 2^{-n}\ve$. Then $w:=\sumn w_n\in\es$,
$w\ge f$ and $\int w\,d\mu< \int f\,d\mu+\ve$. 
\end{proof}

We recall that an arbitrary set $A$ in $X$ is
called \emph{thin} at a point $x\in X$ if $\hat \ve_x^A\ne \vx$. 
By definition, the \emph{base} $b(A)$ of $A$ is the set of all $x\in X$ such that $A$ is not thin at~$x$, that is, $\hat
\ve_x^A=\vx$. By \cite[VI.4.8]{BH},  
\begin{equation}\label{base-prob}
                                    b(A)=\{x\in X\colon T_A=0\mbox{ $P^x$-almost surely}\}, \quad \mbox{ if } A\in\B.
\end{equation} 
  By \cite[VI.4.1 and VI.4.4]{BH}, the base of every set $A$ in $X$ is a finely closed $G_\delta$-set
containing the fine interior of $A$, and $A\cup b(A)$ is the fine closure of $A$.
 Moreover, for every $x\in X$, the measure $\hat\ve_x^A$ is supported by the fine closure of $A$, 
that is, the inner measure of its complement is zero; see \cite[VI.4.6]{BH}. 

A set $F$ in $X$ is called \emph{totally thin} if $b(F)=\emptyset$ so that, in particular, $F$ is finely closed. 
A \emph{semipolar set} is a countable union of totally thin sets. 
We know that, for any infimum $u$ of functions in $\es$, the set  $\{\hat u<u\} $ is
semipolar; see \cite[VI.5.11] {BH}. 
 
\begin{example}\label{hyperplanes}{\rm     
For space-time Brownian motion on $\reald\times \real$, every hyperplane $H_t:=\real^d\times \{t\}$ is totally thin.
 
}\end{example}
 
For the remainder of this section let us fix a function $u\in \B^+(X)$ which is nearly hyperharmonic.
By  \cite[Proposition 2.5]{HN-mertens}, for every $A\in \B$,
\begin{equation}\label{nearly-used}
          P_Au\le u  \und \hat P_A u\le u. 
\end{equation}

\begin{remark} {\rm         
Let $S,T$ be stopping times for $\mathfrak X$,  $S\le T$.  Then
$P_Tw\le P_Sw\le w$ for every $w\in\es$; see \cite[VI.3.4]{BH}.
By \cite[Corollary 2.6]{HN-mertens} (which uses that, for   $x\in X$,
 the extreme points in the weak$^\ast$-compact convex set $\M_x(\es)$ 
of all measures~$\mu$ on~$X$ satisfying \hbox{$\int w\,d\mu\le w(x)$} for every $w\in\es$
are the measures $\vx^A$, $A\in \B$),  this implies that $P_Su\le u$.  So the nearly hyperharmonic 
function~$u$  is strongly                                
supermedian in the sense of 
\hbox{\cite{beznea-boboc-feyel, beznea-boboc-book, feyel-rep, feyel-fine, moko-ens-compacts}}. 
 By~\cite[Proposition 2.7]{HN-mertens}, 
we even get the  inequality $P_Tu\le P_Su$ (and hence, by a standard argument, 
$ E^x(u\circ X_T|\mathfrak M_S)\le u\circ X_S$ $P^x$-a.s.\ for every   $x\in X$).

Since we shall not use these facts in the sequel, they may also be viewed
as \emph{consequences}  of Theorem \ref{main}      
  (to obtain $P_T(u\wedge n) \le P_S(u\wedge n) $, $n\in\nat$, 
we  consider the finite measures $\mu:=P_T(x,\cdot)+P_S(x,\cdot)$, $x\in X$). 
}
\end{remark} 

Let us note  that (\ref{nearly-used}) implies the following.

\begin{lemma}\label{fusc}
The function $u$ is finely upper semicontinuous and, starting in the finely open set $U:=\{u<\infty\}$,
the process $\mathfrak X$ does not leave $U$, that is,
\begin{equation}\label{U-stable}
P^x[T_{X\setminus U}<\infty]=0 \qquad\mbox{ for every $x\in U$.}
\end{equation} 
\end{lemma}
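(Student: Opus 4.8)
The plan is to establish fine upper semicontinuity first and then deduce the stability of $U$ from it. Since $u$ is finely upper semicontinuous exactly when each superlevel set $\{u\ge c\}$ is finely closed, i.e.\ when $b(\{u\ge c\})\subseteq\{u\ge c\}$, I would fix $c>0$, put $A:=\{u\ge c\}\in\B(X)$, take a point $x\in b(A)$, and try to show $u(x)\ge c$.

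The key difficulty is that the inequality $\hat P_Au\le u$ from (\ref{nearly-used}) is \emph{vacuous} at such a point: since $x\in b(A)$ we have $T_A=0$ $P^x$-a.s.\ by (\ref{base-prob}), hence $X_{T_A}=x$ and $\hat P_Au(x)=u(x)$ tautologically. The idea for circumventing this is to replace $A$ by totally thin subsets, into which the process genuinely lands when it hits them. First I would split $A=\{\hat u\ge c\}\cup A'$ with $A':=A\cap\{\hat u<c\}$. Because $\hat u\in\es$ is finely continuous, $\{\hat u\ge c\}$ is finely closed and does not contain $x$ (as $\hat u(x)\le u(x)<c$), so $x\notin b(\{\hat u\ge c\})$; since the base is finitely additive (a consequence of $T_{B\cup C}=T_B\wedge T_C$ together with Blumenthal's zero--one law) and $x\in b(A)$, it follows that $x\in b(A')$. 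Now $A'\subseteq\{\hat u<u\}$ is semipolar, so I would write $A'=\bigcup_n F_n$ as an increasing union of totally thin sets $F_n$. For each $n$ the measure $\hat\ve_x^{F_n}$ is carried by the fine closure of $F_n$, which equals $F_n$ itself because $b(F_n)=\emptyset$; as $u\ge c$ on $F_n\subseteq A$, this yields
\begin{equation*}
  c\,P^x[T_{F_n}<\infty]=c\,\hat\ve_x^{F_n}(F_n)\le\int u\,d\hat\ve_x^{F_n}=\hat P_{F_n}u(x)\le u(x).
\end{equation*}
On the other hand $x\in b(A')$ gives $T_{A'}=0$ a.s., and since $F_n\uparrow A'$ forces $T_{F_n}\downarrow T_{A'}$, the events $\{T_{F_n}<\infty\}$ increase to $\{T_{A'}<\infty\}$, a set of full measure; hence $P^x[T_{F_n}<\infty]\uparrow1$. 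Combining the two facts gives $c\le u(x)$, the desired conclusion.

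Granting fine upper semicontinuity, the stability of $U$ is short. The set $F:=X\setminus U=\{u=\infty\}=\bigcap_n\{u\ge n\}$ is then finely closed, so for $x\in U$ the measure $\hat\ve_x^F$ is carried by $F$, where $u\equiv\infty$. Consequently $\int u\,d\hat\ve_x^F=\infty\cdot\hat\ve_x^F(F)$, and since $\hat P_Fu(x)=\int u\,d\hat\ve_x^F\le u(x)<\infty$ by (\ref{nearly-used}), we must have $\hat\ve_x^F(F)=0$. As $\hat\ve_x^F$ is carried by $F$, its total mass $P^x[T_F<\infty]$ vanishes, which is exactly (\ref{U-stable}).

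The step I expect to be the main obstacle is precisely the one in the second paragraph: turning the useless identity $\hat P_Au(x)=u(x)$ at a base point into genuine information. Everything hinges on isolating a semipolar, hence by-totally-thin-sets-exhaustible, piece of $\{u\ge c\}$ on which hitting the set means landing in it; the finite additivity of the base (used to discard the finely closed part $\{\hat u\ge c\}$) and the semipolarity of $\{\hat u<u\}$ for the nearly hyperharmonic $u$ are the ingredients that make this reduction work.
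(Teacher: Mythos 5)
Your reduction hinges on the assertion that $A'\subseteq\{\hat u<u\}$ is semipolar, and that is exactly where the argument collapses: for a nearly hyperharmonic $u$, semipolarity of $\{\hat u<u\}$ is \emph{not} available at this stage. The fact recalled in Section \ref{sec-prelim} (from \cite[VI.5.11]{BH}) gives semipolarity of $\{\hat v<v\}$ only when $v$ is an infimum of functions in $\es$ --- but that $u$ is such an infimum is precisely statement (2) of Theorem \ref{main}, the result being proved. The paper itself obtains semipolarity of $\{\hat u<u\}$ for $u\in\N\cap\B(X)$ only in Section \ref{remaining}, as a consequence of the implication (1)\,$\Rightarrow$\,(2), whose proof (Sections \ref{sec-special} and \ref{sec-general}) uses Lemma \ref{fusc} essentially (e.g.\ for (\ref{FcapU}) and in Proposition \ref{key-reduction}). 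So within the paper's logic your proof is circular; and appealing instead to the literature on strongly supermedian functions (Mertens \cite{mertens}) would presuppose that nearly hyperharmonic functions are strongly supermedian --- which the paper also defers to a consequence of Theorem \ref{main} --- and would import exactly the deep results the paper sets out to re-prove. A secondary, more technical issue: you apply $T_{F_n}$, $\hat P_{F_n}$ and (\ref{nearly-used}) to the totally thin pieces $F_n$, which therefore must be Borel; a semipolar set is by definition a countable union of totally thin sets, and you would still have to arrange Borel measurability of these pieces.

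The paper's proof avoids semipolarity entirely by arguing from the complement: for $a\in[0,\infty]$, $A:=\{u\ge a\}$ and $x\notin A$, it chooses by \cite[VI.3.14]{BH} an increasing sequence of \emph{compacts} $K_n\subseteq A$ with $T_{K_n}\downarrow T_A$ $P^x$-a.s. Compactness then does the work that total thinness was meant to do in your proposal: $X_{T_{K_n}}\in K_n$ on $[T_{K_n}<\infty]$, so $aP^x[T_{K_n}<\infty]\le P_{K_n}u(x)\le u(x)<a$, and passing to the limit gives $P^x[T_A<\infty]<1$, hence $x\notin b(A)$ by (\ref{base-prob}). This shows $b(A)\subseteq A$ for every level $a$, i.e.\ fine upper semicontinuity, and the single choice $a=\infty$ already yields (\ref{U-stable}). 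You identified the right obstacle --- that $\hat P_Au\le u$ is vacuous on $b(A)$ --- but the legitimate way around it here is compact approximation of hitting times, not a semipolar decomposition.
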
 

\begin{proof} 
Let $a\in [0,\infty]$, $A:=\{u\ge a\}$ and $x\in X\setminus A$. 
 By \cite[VI.3.14]{BH}, 
there exists an increasing sequence $(K_n)$ of  compacts in the Borel set $A$ 
such that $T_{K_n}\downarrow T_A$~$P^x$-a.s. Since $X_{T_{K_n}}\in K_n$ on
$[T_{K_n}<\infty]$, the inequalities $P_{K_n}u\le u$ yield that 
\begin{equation*} 
      aP^x[T_A<\infty] =\limn aP^x[T_{K_n}<\infty]\le u(x)<a. 
\end{equation*} 
Hence $P^x[T_A<\infty]<1$, $x\notin b(A)$. So $A$ is finely closed
showing that  $u$ is finely upper semicontinuous. 
Finally,  taking   $a=\infty$, we see that (\ref{U-stable}) holds.
\end{proof}

For every $V\in \U_c$, due to the lower semicontinuity  
of  $P_\vc u$ on $V$, we know that 
\begin{equation}\label{nearly-used-l}
  \hat P_\vc u=P_\vc u\le \hat u \on V
\end{equation} 
 (see \cite[(2.3)]{HN-mertens}).
The following  more general estimate will be crucial in Section~\ref{sec-special}.

\begin{lemma}\label{PFuh} 
Let   $A\in\B$ and $x\in X\setminus b(A)$ such that $x$ is not finely isolated.  Then  
\begin{equation*} 
\hat P_Au (x) =E^x(u\circ X_{T_A}) \le \hat u (x). 
\end{equation*}  
\end{lemma}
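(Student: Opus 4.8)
The plan is to establish the single nontrivial assertion, namely the inequality $\hat P_Au(x)\le\hat u(x)$ (the equality $\hat P_Au(x)=E^x(u\circ X_{T_A})$ being immediate from $\hat P_A=P_{T_A}$), by comparing the hitting time $T_A$ with the exit times of a sequence of shrinking neighborhoods of $x$ and then applying the strong Markov property together with the estimates (\ref{nearly-used}) and (\ref{nearly-used-l}) already at our disposal.

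First I would extract two consequences of the hypotheses. Since $x\notin b(A)$, the characterization (\ref{base-prob}) and Blumenthal's zero--one law give $P^x[T_A>0]=1$. Since $x$ is not finely isolated, $x$ lies in the fine closure of $X\setminus\{x\}$, hence $x\in b(X\setminus\{x\})$, so by (\ref{base-prob}) the process leaves $x$ immediately. Next, using that $X$ has a countable base, I would fix a decreasing sequence $(V_n)$ in $\U_c$ of neighborhoods of $x$ with $\bigcap_n V_n=\{x\}$ and set $\tau_n:=D_{V_n^c}$. Because $x\notin\overline{V_n^c}$, each $V_n^c$ is thin at $x$, so $\tau_n>0$ a.s.; the sequence $(\tau_n)$ is decreasing, and I claim $\tau_n\downarrow0$ a.s. Indeed, for $0\le t<\tau_n$ one has $X_t\in V_n$, so if $\sigma:=\lim_n\tau_n$ were positive on a set of positive measure, then $X_t\in\bigcap_nV_n=\{x\}$ for all $t<\sigma$, contradicting that the process leaves $x$ immediately; hence $\sigma=0$ a.s.

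The main step is the estimate $E^x(u\circ X_{T_A};\,\tau_n<T_A)\le\hat u(x)$ for every $n$. Here I would use that $\{\tau_n<T_A\}\in\mathfrak M_{\tau_n}$ and that on this event the additivity $T_A=\tau_n+T_A\circ\theta_{\tau_n}$ holds, so that $u\circ X_{T_A}=(u\circ X_{T_A})\circ\theta_{\tau_n}$ there. The strong Markov property then yields
\begin{equation*}
 E^x(u\circ X_{T_A};\,\tau_n<T_A)=E^x\bigl(\hat P_Au(X_{\tau_n});\,\tau_n<T_A\bigr)\le E^x\bigl(u(X_{\tau_n})\bigr)=P_{V_n^c}u(x)\le\hat u(x),
\end{equation*}
where the first inequality uses $\hat P_Au\le u$ from (\ref{nearly-used}) and the last uses (\ref{nearly-used-l}) together with $x\in V_n$.

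Finally, since $(V_n)$ decreases, the events $\{\tau_n<T_A\}$ increase, and their union is $\{T_A>0\}$, which has full measure. As $u\circ X_{T_A}\ge0$, the monotone convergence theorem gives
\begin{equation*}
 \hat P_Au(x)=E^x(u\circ X_{T_A})=\lim_n E^x(u\circ X_{T_A};\,\tau_n<T_A)\le\hat u(x),
\end{equation*}
which is the assertion. The delicate points I expect are the measurability fact $\{\tau_n<T_A\}\in\mathfrak M_{\tau_n}$ and the hitting-time additivity on $\{\tau_n<T_A\}$, both standard for stopping times and first hitting times of Borel sets; passing the estimate through monotone convergence on the \emph{increasing} events is what neatly removes any integrability concern, so that the argument also covers the case $u(x)=\infty$.
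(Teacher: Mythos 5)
Your proof is correct, and its core mechanism is the same as the paper's: compare $T_A$ with the exit times of a shrinking sequence of relatively compact neighborhoods of $x$ (using $x\notin b(A)$ to get $T_A>0$ $P^x$-a.s.\ and the fine non-isolation of $x$ to get the exit times tending to $0$), then apply the strong Markov property at the exit time together with $\hat P_Au\le u$ from (\ref{nearly-used}) and $P_{V^c}u\le\hat u$ on $V$ from (\ref{nearly-used-l}). The one genuine difference is how the limit is organized. The paper first reduces to bounded $u$ (via $u\wedge n$ and $\widehat{u\wedge n}\uparrow\hat u$, quoting \cite[Proposition 2.3]{HN-mertens}), picks a single neighborhood with $P^x[T_A\le\tau]<\ve/M$, and controls the contribution of the bad event $[T_A\le\tau]$ by $\ve$; you never need to control that event at all, since you integrate only over the increasing events $\{\tau_n<T_A\}$, whose union is $\{T_A>0\}$ of full $P^x$-measure, and let monotone convergence finish the argument. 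This is a modest but real simplification: it removes the truncation step and its appeal to the external fact about regularizations of $u\wedge n$, and it accommodates unbounded $u$ (and $u(x)=\infty$) with no extra care. Otherwise the two arguments run in parallel — your hitting-time additivity $T_A=\tau_n+T_A\circ\theta_{\tau_n}$ on $\{\tau_n<T_A\}$ plays exactly the role of the paper's identity $T_A=\tau+D_A\circ\theta_\tau$ on $[T_A>\tau>0]$, and the chain of inequalities after the strong Markov property is the same in both.
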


\begin{proof} 
Since $u_n:=u\wedge n$ is nearly hyperharmonic  for every $n\in\nat$ and
  $\hat u_n\uparrow \hat u$ by \cite[Proposition 2.3]{HN-mertens},
  we may assume without loss of generality that $u$ is bounded, say
  $u< M<\infty$. 

Let $W_n\in \U_c$ such that $W_n\downarrow \{x\}$ as $n\to\infty$.
 By assumption, $x\in b(X\setminus \{x\})$, 
 and hence $D_{W_n^c}\downarrow
D_{X\setminus \{x\}}=0$ $P^x$-a.s., whereas $T_A>0$ $P^x$-a.s.
So there exists $n\in\nat$ such that $\tau:=D_{W_n^c}$ satisfies
$      P^x[T_A\le \tau ]<\ve/M$, and therefore 
\begin{equation*}
  E^x(u\circ X_{T_A}) \le \ve +  E^x(u\circ X_{T_A}; T_A>\tau). 
\end{equation*} 
Let us note that $\tau>0$ $P^x$-a.s. and that 
obviously, on the set $[T_A>\tau>0]$ we have $ T_A=\tau + D_A\circ \theta_\tau$
whence $X_{T_A}=X_{D_A}\circ  \theta_\tau$. Thus 
we conclude that
\begin{eqnarray*} 
 E^x(u\circ X_{T_A}; T_A>\tau)&\le& E^x(u\circ X_{D_A}\circ
 \theta_\tau)         =E^x(E^{X_\tau}(u\circ X_{D_A}))\\[2pt]
&=&P_\tau P_Au(x)\le P_\tau u(x) \le \hat u(x),
\end{eqnarray*} 
by the strong Markov property and (\ref{nearly-used-l}).
\end{proof}

Finally, let us recursively define stopping times $S_n^A$, $A\in \B$, $n\ge 0$, by
\begin{equation}\label{def-Sn}
   S_0^A:=D_A \und S_{n+1}^A:=S_n^A+T_A\circ \theta_{S_n^A}.
\end{equation} 
So  $S_{n+1}^A$ is the time of the first hitting of $A$ after the time $S_n$,  
  $X_{S_{n+1}^A}=X_{T_A}\circ \theta_{S_n^A}$; see \cite[Section IV.6]{BH}.

\begin{proposition} \label{Proposition-PAn}  
             Let $A\in \B$ and $n\ge 0$. Then
\begin{equation} \label{PAn}
  P_n^Af(x):=E^x(f\circ X_{S_n^A}) = P_A(\hat P_A)^n f(x) , \qquad   f\in\B^+(X), \ x\in X. 
\end{equation}
\end{proposition}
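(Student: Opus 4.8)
The plan is to argue by induction on $n$, with the strong Markov property doing essentially all of the work. The base case $n=0$ is immediate from the definition: since $S_0^A=D_A$, we have $P_0^Af(x)=E^x(f\circ X_{D_A})=P_Af(x)$, which is exactly $P_A(\hat P_A)^0f(x)$.

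For the inductive step I would fix $f\in\B^+(X)$ and exploit the identity $X_{S_{n+1}^A}=X_{T_A}\circ\theta_{S_n^A}$ recorded just before the statement. Applying the strong Markov property at the stopping time $S_n^A$ to the positive functional $\omega\mapsto f\bigl(X_{T_A}(\omega)\bigr)$ gives
\[
E^x\bigl(f\circ X_{S_{n+1}^A}\bigr)=E^x\bigl((f\circ X_{T_A})\circ\theta_{S_n^A}\bigr)=E^x\bigl(E^{X_{S_n^A}}(f\circ X_{T_A})\bigr)=E^x\bigl((\hat P_Af)\circ X_{S_n^A}\bigr),
\]
where in the last equality I use the definition $\hat P_Af(y)=E^y(f\circ X_{T_A})$. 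The right-hand side is precisely $P_n^A(\hat P_Af)(x)$.

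Now I would invoke the induction hypothesis, but applied to the Borel function $\hat P_Af$ in place of $f$; this is legitimate because $\hat P_A$ is a kernel, so $\hat P_Af\in\B^+(X)$. This yields $P_n^A(\hat P_Af)(x)=P_A(\hat P_A)^n(\hat P_Af)(x)=P_A(\hat P_A)^{n+1}f(x)$, which completes the induction and hence the proof.

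The only point requiring genuine care is the application of the strong Markov property: one must confirm that $f\circ X_{T_A}$ is an admissible positive, $\mathfrak M$-measurable functional, so that the conditional form $E^x\bigl((f\circ X_{T_A})\circ\theta_{S_n^A}\mid\mathfrak M_{S_n^A}\bigr)=E^{X_{S_n^A}}(f\circ X_{T_A})$ holds $P^x$-a.s.\ and may then be integrated against $P^x$. Positivity of $f$ is what lets one avoid any integrability hypotheses. Since $T_A$ is a stopping time and $\mathfrak X$ is a Hunt process, I expect this verification to be entirely routine, so the whole argument stays short.
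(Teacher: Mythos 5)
Your proof is correct and follows essentially the same route as the paper's: induction on $n$, base case $S_0^A=D_A$, and the strong Markov property applied at $S_n^A$ together with the identity $X_{S_{n+1}^A}=X_{T_A}\circ\theta_{S_n^A}$, followed by the induction hypothesis applied to $\hat P_Af$. Your extra remark that $\hat P_Af\in\B^+(X)$ is indeed justified, since the paper establishes in Section 2 that $\hat P_A$ is a kernel on $X$.
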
  

\begin{proof} 
 Obviously,  (\ref{PAn}) holds for $n=0$. 
  Suppose that it  is true for some $n\ge 0$, 
  and let $f\in \B^+(X)$, $x\in X$,  $S_n:=S_n^A$.    
  By the  strong Markov property,
\begin{eqnarray*} 
  E^x( f\circ X_{S_{n+1}})= E^x( f\circ X_{T_A}\circ \theta_{S_n})    &      =&
                              E^x (E^{X_{S_n}}(f\circ T_A))\\  &=&E^x(( \hat P_A f)\circ X_{S_n})
                                                                  = P_A(\hat P_A)^{n+1} f(x).
          \end{eqnarray*} 
 \end{proof} 

  Given $A\in \B $,  let  
\begin{equation*}
  P^A:=\sumnn P_n^A
\end{equation*}
so that, by Proposition \ref{Proposition-PAn},
\begin{equation*}
  P^A1(x)= \sumnn P^x[S_n^A<\infty]=\sumnn P_A(\hat P_A)^n1(x)
  \end{equation*} 
  is the expected number of times
the process~$\mathfrak X$ visits $A$ when  starting in~$x$. 
Clearly, $ P^A1=P_Aw$ with $w:=\sumnn (\hat P_A)^n1\in\es$  and $\hat P_A w+1=w$.
Thus, by~(\ref{Rvv}), (\ref{PPhat}) and (\ref{PDT}), we obtain the following
(where  we may note that $\hat v$   is the expected number of visits
 of $A$ at \emph{strictly positive} times). 

\begin{proposition}\label{Rvav}
  For every $A\in \B$,   $v:=P^A1$ satisfies $ R_v=v$  and $\hat v+1_A=v$.
\end{proposition}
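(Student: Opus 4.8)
The plan is to realise $v=P^A1$ as the reduction on $A$ of a single explicit excessive function and then to extract both assertions from the way $P_A$ and its regularisation $\hat P_A$ act on that function. Concretely, I would set $w:=\sumnn(\hat P_A)^n1$ and reduce everything to three facts: $w\in\es$, $v=P_Aw=R_w^A$, and the renewal identity $\hat P_Aw=w-1$. Granting these, the proposition follows formally.

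First I would record $v=P_Aw$. By Proposition \ref{Proposition-PAn}, $P_n^A1=P_A(\hat P_A)^n1$, and since $P_A$ is a positive linear kernel I may interchange it with the increasing partial sums by monotone convergence:
\begin{equation*}
v=P^A1=\sumnn P_n^A1=\sumnn P_A(\hat P_A)^n1=P_A\Bigl(\sumnn(\hat P_A)^n1\Bigr)=P_Aw.
\end{equation*}
Next I would verify $w\in\es$. The constant $1$ lies in $\es$, because $P_t1\le1$ while $\lim_{t\to0}P_t1=1$, so $\sup_{t>0}P_t1=1$. Moreover $\hat P_A$ maps $\es$ into $\es$: for $w'\in\es$ one has $\hat P_Aw'=\hat R_{w'}^A\in\es$ by (\ref{PPhat}) and (\ref{red-es}). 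Hence every iterate $(\hat P_A)^n1$ is excessive, the partial sums $\sum_{n=0}^N(\hat P_A)^n1$ increase in $\es$, and stability of $\es$ under increasing limits (property (i) of balayage spaces) gives $w\in\es$. The renewal identity is then a mere index shift, again legitimised by monotone convergence,
\begin{equation*}
\hat P_Aw=\hat P_A\sumnn(\hat P_A)^n1=\sumn(\hat P_A)^n1=w-1.
\end{equation*}

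With these in place the two claims drop out. Since $v=P_Aw=R_w^A$ with $w\in\es$, idempotency of the reduction (\ref{Rvv}) yields $R_v=v$. For the second identity I would identify the lower semicontinuous regularisation of $v$: as $v=R_w^A$, its largest lower semicontinuous minorant is $\hat v=\widehat{R_w^A}=\hat R_w^A=\hat P_Aw=w-1$, using (\ref{PPhat}) and the renewal identity. Finally (\ref{PDT}) tells me that $v=P_Aw$ equals $w$ on $A$ and equals $\hat P_Aw=w-1$ on $X\setminus A$; comparing with $\hat v=w-1$ everywhere, I get $v-\hat v=w-(w-1)=1$ on $A$ and $v-\hat v=(w-1)-(w-1)=0$ on $X\setminus A$, that is, $\hat v+1_A=v$.

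The step I expect to require the most care is the verification that $w\in\es$ together with the renewal identity $\hat P_Aw=w-1$, since everything afterwards is purely formal. The three ingredients there --- that $1\in\es$, that $\hat P_A$ preserves $\es$, and that $\es$ is closed under increasing sums --- must be combined cleanly, and one must be sure the monotone-convergence interchanges (both in $\sumnn P_A(\cdot)$ and in $\hat P_A\sumnn(\cdot)$) are applied to genuinely increasing sequences of positive functions, which they are. Once this is secured, the identifications through (\ref{Rvv}), (\ref{PPhat}) and (\ref{PDT}) and the case split on $A$ versus $X\setminus A$ are routine.
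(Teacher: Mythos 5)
Your proof is correct and takes essentially the same route as the paper, which likewise writes $v=P_Aw$ with $w:=\sumnn(\hat P_A)^n1\in\es$, notes the renewal identity, and concludes via (\ref{Rvv}), (\ref{PPhat}) and (\ref{PDT}); you have merely filled in the details the paper labels ``clearly''. The only cosmetic difference is that the paper states the renewal identity additively as $\hat P_Aw+1=w$, which is the safer formulation at points where $w=\infty$ (your subtractions like $v-\hat v=w-(w-1)$ are undefined there, though the asserted identity $\hat v+1_A=v$ still holds since both sides are infinite).
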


\section{A special case}\label{sec-special}

Throughout this section we fix a 
nearly hyperharmonic function $u\in\B^+(X)$ and suppose the following. 

\begin{assumption}\label{ass} 
  Defining  $U:=\{u<\infty\}$,        
and  $F:=\{\hat u<u\} $, we have
\begin{equation}\label{ass-formula}
       P^F 1<\infty  \on U,
     \end{equation}
     that is, starting in $U$, the expected number of times the process $\mathfrak X$ visits~$F$ is finite.
 \end{assumption}

If $x\in b(F)$, then, by (\ref{base-prob}),  $S_n=0$ $P^x$-a.s.\ for every $n\ge 0$, $P^F1(x)=\infty$.  
Hence  (\ref{ass-formula}) implies that $b(F)\cap U=\emptyset$. So $F$ is   totally thin
if $u<\infty$.
By Lemma \ref{fusc}  and a~straightforward induction, we see  that, for every $x\in U$ and 
 $P^x$-a.e.\ $\omega\in [S_n<\infty]$,  
 \begin{equation}\label{FcapU}
      X_{S_n}(\omega)\in F\cap U   \quad and \quad    S_n(\omega)<S_{n+1}(\omega),
\end{equation} 
 and, for $P^x$-a.e.\  $\omega\in\Omega$,
\begin{equation}\label{sge0}
             \{S_n(\omega) \colon n\ge 0, \, S_n (\omega)<\infty\}=\{s\ge 0\colon X_s(\omega)\in F\}.
\end{equation} 

\begin{example}{\rm
Let us consider space-time Brownian motion 
on $\reald\times \real$ and fix a~sequence $(t_n)$ in $\real$.
For   $n\in\nat$, let $A_n$ be an arbitrary subset of~$H_{t_n}=\reald\times \{t_n\}$ and let 
  \begin{equation*} 
 v_n:=1_{A_n}+1_{\reald\times (t_n,\infty)}.
 \end{equation*} 
 Then $v:=\sumn 2^{-n}v_n\in\N$, $0\le v\le 1$,  and $\{\hat v<v\}$ is the union of all $A_n$, $n\in\nat$.

 If $\{t_n\colon n\in\nat\} $ is dense in~$\real$  
 and $A_n=H_{t_n}$, $n\in\nat$, then
 $\{\hat v <v\}$ is finely dense in~$\reald\times \real$, 
 $P^{\{\hat v<v\}}1=\infty$. 
So Assumption \ref{ass} is very restrictive. 

If  $t_n=-1/n$ and $A_n=H_{t_n}$, $n\in\nat$, then $v:=\infty\cdot 1_{\reald\times [0,\infty] }
+\sum_{n\ge 1} 2^{-n}v_n \in \N$ and $v$ satisfies  Assumption \ref{ass} with $b(\{\hat v<v\})=H_0$.
 }
\end{example}

We define  functions $g$ and $u_0$  on $X$ by 
\begin{equation*} 
  g(x):=   
   P^F(1_Uu-1_U\hat u)\und u_0(x):=u(x)-g(x), \qquad x\in U,
\end{equation*} 
  and $g(x)=u_0(x)=\infty$, if $ x\in X\setminus U$.

\begin{definition}   
For   $A\in \B$,  let $\R(A)$ be the set    
of  sums of a function in  $\es$  
  and    countably many functions~$P_Bw$,   
where $B\in \B$, $w\in \es$, $B\subset A$ and $w$ is~bounded. 

 \end{definition}

Clearly, $\R(A)\subset  \R(X)\subset \N\cap\B^+(X)$  and,
by   (\ref{Rvv}) and Lemma~\ref{sums},   
\begin{equation}\label{NR}
            v=R_v=\inf\{w\in \es\colon w\ge v\} \quad\mbox{ for every }v\in \R(X).
\end{equation}

In this section we shall establish  the following result. 

\begin{theorem}\label{main-special}
 The function $g$ is a minorant of $u$, both $g$ and $u_0$ are nearly hyperharmonic, $g-\hat g=u-\hat u $ on $U$
and $\hat u_0=u_0$ on $U$.

Further,   there are functions $g_1, u_1\in \R(F)$ 
 such that $g_1=g$ on~$U$ and $u_1=u$ on~$U$. In particular, if $u<\infty$, then $u_0\in \es$ and $u=R_u$.
\end{theorem}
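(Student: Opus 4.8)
The plan is to prove everything first in the bounded case $u<M$ (so that $U=X$) and then to recover the general statement by truncation. Writing $\phi:=1_U(u-\hat u)$, which is supported in $F$, we have $g=\sum_{n\ge0}P_n^F\phi$. The most robust step is the minorant inequality $g\le u$. Using the strong Markov property in the form of Proposition \ref{Proposition-PAn} and Lemma \ref{PFuh} (which yields $\hat P_Fu\le\hat u$ at the non-finely-isolated points of $F\cap U$, the only ones the process actually visits by (\ref{FcapU})), I would first establish the one-step estimate $P_{n+1}^Fu(x)=E^x(\hat P_Fu\circ X_{S_n^F})\le E^x(\hat u\circ X_{S_n^F})=P_n^Fu(x)-P_n^F\phi(x)$, i.e. $P_n^Fu-P_{n+1}^Fu\ge P_n^F\phi\ge0$. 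Summing over $n$ and using $P_Fu\le u$ from (\ref{nearly-used}) together with $P_n^Fu\ge0$, the sum telescopes to $g=\sum_{n\ge0}P_n^F\phi\le P_Fu\le u$. Thus $g$ is a minorant of $u$ and $u_0=u-g\ge0$; the finite-visit Assumption \ref{ass} makes $P_n^Fu\to0$, so the telescoping is in fact exact in the limit.

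Next I would compute the regularization $\hat g$. The guiding identity is that of Proposition \ref{Rvav}, where $v=P^A1$ satisfies $v-\hat v=1_A$. Here $g=P^F\phi$ is the analogous occupation potential carrying the weight $\phi$ at each visit to $F$, and I expect $g-\hat g=\phi=u-\hat u$ on $U$: the jump of $g$ at a point $x\in F\cap U$ is exactly the time-zero contribution $\phi(x)$ (since $F$ is thin there), while the finely lower semicontinuous part $\hat g$ collects only the visits at strictly positive times, $\hat g=\hat P_F\bigl(\sum_{n\ge0}(\hat P_F)^n\phi\bigr)$. To make this rigorous I would dominate $\phi\le M\,1_F$, compare $g$ with $M\,P^F1$ whose regularization is furnished by Proposition \ref{Rvav}, and transfer the conclusion to $g$ by a monotone, linearity argument. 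Granting $g-\hat g=u-\hat u$ on $U$ and $u=\hat u+\phi$, I obtain the clean formula $u_0=\hat u-\hat g$ on $U$, a difference of the two excessive functions $\hat u,\hat g\in\es$ (recall $\hat w=\hat w^f\in\es$ for every $w\in\N$) with $\hat g\le\hat u$.

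It then remains to show that this difference is itself excessive (equivalently, finely continuous and supermedian), which simultaneously gives $\hat u_0=u_0$ on $U$ and, in the bounded case, $u_0\in\es$, as well as $g,u_0\in\N$. For the supermedian inequality $\hat P_{\vc}u_0\le u_0$ I would localize the telescoping argument of the first paragraph inside $V\in\U_c$: the decrement $\hat u-\hat P_{\vc}\hat u$ equals the expected sum of the jumps $u-\hat u$ collected at the visits to $F$ strictly before the exit from $V$, and this is precisely $\hat g-\hat P_{\vc}\hat g$, so the two decrements match and $u_0$ is supermedian; fine continuity then follows from $g-\hat g=u-\hat u=\phi$ together with the fine upper semicontinuity of $u$ from Lemma \ref{fusc}.

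Finally, for the representation in $\R(F)$ I would start from $g=P^Fu-P^F\hat u$ on $U$ and exploit the clean half $P^F\hat u=\sum_{n\ge0}P_F\bigl((\hat P_F)^n\hat u\bigr)\in\R(F)$, where each $(\hat P_F)^n\hat u\in\es$ is bounded and the series converges by Assumption \ref{ass}; refining this by decomposing the weight $\phi$ on the totally thin set $F$ into the jumps $(w_k-\hat P_Fw_k)1_F$ of balayages $R_{w_k}^F=P_Fw_k$ of bounded $w_k\in\es$ should produce $g_1\in\R(F)$ with $g_1=g$ on $U$; then $u_1:=u_0+g_1\in\R(F)$ (since $u_0\in\es$) satisfies $u_1=u$ on $U$, and (\ref{NR}) delivers $u=R_u$ when $u<\infty$. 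The unbounded case follows by applying all of the above to $u\wedge m$ and letting $m\to\infty$, using $\widehat{u\wedge m}\uparrow\hat u$ and the countable-sum stability of Lemma \ref{sums}. I expect the genuine obstacles to be the rigorous identification of $\hat g$ in the second paragraph and the passage from the easy half $P^F\hat u\in\R(F)$ to a full representation $g\in\R(F)$, precisely because the weight $\phi=u-\hat u$ is not excessive; closely tied to this is the remaining delicate point, the excessiveness of the difference $u_0=\hat u-\hat g$.
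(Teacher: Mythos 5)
Your first paragraph is sound and is in substance the paper's own argument: it is Lemma \ref{g-tau} specialized to $V=X$, followed by Proposition \ref{g-function}. The proof breaks down, however, exactly at the step you yourself flag as delicate. The identity you propose --- that the decrement $\hat u-\hat P_{\vc}\hat u$ equals the expected sum of the jumps $u-\hat u$ collected at visits to $F$ strictly before the exit from $V$, and hence equals $\hat g-\hat P_{\vc}\hat g$ --- is false. Take $u\in\es$ not harmonic (say the Newtonian potential of a point mass, for Brownian motion in $\real^3$): then $F=\emptyset$ and $g=0$, so your identity would force $\hat u=\hat P_{\vc}\hat u$ on every $V\in\U_c$, i.e.\ every excessive function would be harmonic. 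The decrement of $\hat u$ on $V$ contains the entire potential part of $\hat u$ there, not only the jumps of $u$. What is true is only the inequality $g-P_\tau g\le u-P_\tau u$ at points $x\in V\cap U$, $\tau:=D_\vc$, and this cannot be obtained by ``localizing'' your paragraph-one telescoping, which uses $T_F$ alone: paths may leave $V$ between two visits to $F$. The paper's Lemma \ref{g-tau} telescopes instead with the combined hitting time $T:=T_{F\cup\vc}$, so that each step splits into ``next visit to $F$ inside $V$'' and ``exit from $V$'', and the leftover term is then absorbed by near-hyperharmonicity of $u$ tested on the set $F\cup\vc$, via (\ref{nearly-used}) and $S_0\wedge\tau=D_{F\cup\vc}$, giving $u_0(x)-P_\tau u_0(x)\ge u(x)-E^x(u\circ X_{S_0\wedge\tau})\ge 0$. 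This slack term, invisible in your ``matching decrements'' picture, is precisely what makes $u_0$ nearly hyperharmonic; without it, $u_0\in\N$, $\hat u_0=u_0$ on $U$, and $u_0\in\es$ for $u<\infty$ remain unproven.

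Two further steps do not go through as sketched. First, the identity $g-\hat g=u-\hat u$ on $U$ cannot be reached by dominating $1_U(u-\hat u)\le M1_F$ and a ``monotone, linearity argument'': domination yields inequalities between regularizations, never an exact identity, and your later idea of decomposing the weight into ``jumps of balayages'' is not an actual construction. The paper writes $1_Uu-1_U\hat u=\sum_{k}a_k1_{F_k}$ exactly, as a countable sum of weighted indicators, applies Lemma \ref{tech} and Proposition \ref{Rvav} to each $v_k:=P^{F_k}1=P^F1_{F_k}$ (so that $R_{v_k}=v_k$ and $v_k-\hat v_k=1_{F_k}$), and sums using countable additivity of the regularization on $\N$ (\cite[Proposition 2.3]{HN-mertens}); this single decomposition simultaneously yields $g-\hat g=u-\hat u$ on $U$ \emph{and} the representative $g_1=\sum_k a_kv_k\in\R(F)$, which your sketch never really produces. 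Second, your scheme ``bounded case first, then truncate'' is not available here: Assumption \ref{ass} for $u\wedge m$ demands $P^{F_m}1<\infty$ on all of $X$ (since $\{u\wedge m<\infty\}=X$), whereas the hypothesis only gives $P^F1<\infty$ on $U$ and says nothing about starting points in $\{u=\infty\}$. The paper never truncates $u$ in this theorem; truncation occurs only inside the proof of Lemma \ref{PFuh}, where merely an inequality must survive the passage to the limit.
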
 
 
We prepare its proof   with a lemma leading to estimates by telescoping series.   

\begin{lemma}\label{g-tau}
 Let $V\subset X$ be   open,   
    $x\in V\cap U$, $\tau:=D_\vc$ and $\Omega_n:=\{S_n<\tau\}$. 
Then, for all $n\ge 0$,
 \begin{equation}\label{tel-formula} 
E^x(\hat u\circ X_{S_n} ; \Omega_n)\ge E^x( u\circ X_{S_{n+1}} ; \Omega_{n+1})
                                                         +E^x(u\circ X_\tau;\Omega_n\setminus \Omega_{n+1}).  
\end{equation} 
\end{lemma}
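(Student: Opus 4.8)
The inequality (\ref{tel-formula}) relates the expected value of $\hat u$ at the $n$-th hitting time $S_n$ of $F$ to the expected value of $u$ at the next hitting time $S_{n+1}$, with a correction term accounting for paths that exit $V$ before hitting $F$ again. The plan is to decompose the event $\Omega_n=\{S_n<\tau\}$ according to whether the process hits $F$ again before exiting $V$.

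The key is to work conditionally on $\mathfrak M_{S_n}$ and apply the strong Markov property at $S_n$. On $\Omega_n$ we have $S_n<\tau=D_\vc$, so $X_{S_n}\in V$ (the process is still inside $V$), and by (\ref{FcapU}) we have $X_{S_n}\in F\cap U$ for $P^x$-almost every $\omega\in[S_n<\infty]$. From the point $y:=X_{S_n}\in V\cap U$, I want to apply the crucial estimate from Lemma \ref{PFuh}. The point $y$ lies in $F\subset\{\hat u<u\}$, and since Assumption \ref{ass} forces $b(F)\cap U=\emptyset$, we have $y\in X\setminus b(F)$; moreover $y$ is not finely isolated (otherwise the process could not leave it to reach $\vc$ or hit $F$ again), so Lemma \ref{PFuh} applies with $A=F$ and gives $E^y(u\circ X_{T_F})\le\hat u(y)$.

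**The decomposition after $S_n$.**

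The heart of the argument is to split the post-$S_n$ behavior. After time $S_n$, the process either (i) hits $F$ again (at time $T_F\circ\theta_{S_n}$, producing $X_{S_{n+1}}$) before exiting $V$, landing in $\Omega_{n+1}=\{S_{n+1}<\tau\}$, or (ii) exits $V$ first (at time $\tau$), landing in $\Omega_n\setminus\Omega_{n+1}$. Using the strong Markov property at $S_n$, the quantity $E^x(\hat u\circ X_{S_n};\Omega_n)$ dominates $E^x\big(E^{X_{S_n}}(u\circ X_{T_F\wedge\,\tilde\tau});\Omega_n\big)$, where $\tilde\tau$ is the exit time of $V$ read off from the shifted path. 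The minimum $T_F\wedge\tilde\tau$ splits exactly into the two cases above: on case (i) one recovers $u\circ X_{S_{n+1}}$ restricted to $\Omega_{n+1}$, and on case (ii) one recovers $u\circ X_\tau$ restricted to $\Omega_n\setminus\Omega_{n+1}$. Matching the shifted stopping times to $S_{n+1}$ and $\tau$ on the global path is the bookkeeping that must be done carefully: on $[S_n<\tau]$ one has $\tau=S_n+\tilde\tau\circ\theta_{S_n}$ and $S_{n+1}=S_n+T_F\circ\theta_{S_n}$, so the event $\{T_F<\tilde\tau\}\circ\theta_{S_n}$ corresponds to $\Omega_{n+1}$ and its complement (within $\Omega_n$) to $\Omega_n\setminus\Omega_{n+1}$.

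**The main obstacle and closing the estimate.**

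The main obstacle is the passage from Lemma \ref{PFuh}, which bounds $E^y(u\circ X_{T_F})$ by $\hat u(y)$, to the version with $u$ evaluated at $T_F\wedge\tilde\tau$; that is, I need an inequality of the form $E^y(u\circ X_{T_F};T_F<\tilde\tau)+E^y(u\circ X_{\tilde\tau};\tilde\tau\le T_F)\le\hat u(y)$ for $y=X_{S_n}\in V\cap U$. This should follow by applying the nearly hyperharmonic estimate (\ref{nearly-used-l}) inside $V$ together with Lemma \ref{PFuh}: the contribution from paths exiting $V$ first is controlled by $P_\vc u(y)\le\hat u(y)$ on $V$, while the contribution from paths hitting $F$ first is controlled by an argument analogous to the proof of Lemma \ref{PFuh} (the same reduction to a small neighborhood of $y$ plus the strong Markov property). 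Once this pointwise bound at $y=X_{S_n}$ is in hand, integrating against $P^x(\,\cdot\,;\Omega_n)$ and reassembling the two pieces via the stopping-time identities above yields exactly (\ref{tel-formula}). I expect the delicate point to be the simultaneous handling of the two exit mechanisms $T_F$ and $\tilde\tau$ at the point $y$, ensuring that the finely-non-isolated hypothesis of Lemma \ref{PFuh} is genuinely available and that no mass is lost on the boundary event $\{T_F=\tilde\tau\}$, which has $P^y$-probability zero because $y\notin b(F)$ forces $T_F>0$ while $\tilde\tau$ is the deterministic-in-law exit of the neighborhood.
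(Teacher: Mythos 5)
Your skeleton is the same as the paper's --- apply the strong Markov property at $S_n$ and split according to whether the shifted path hits $F$ again before leaving $V$ --- but there is a genuine gap at precisely the point you call the main obstacle, and your proposed way of closing it does not work. The estimate you need at $y=X_{S_n}\in F\cap U\cap V$ is the joint bound
\begin{equation*}
E^y\bigl(u\circ X_{T_F};\,T_F<\tilde\tau\bigr)+E^y\bigl(u\circ X_{\tilde\tau};\,\tilde\tau\le T_F\bigr)\le \hat u(y).
\end{equation*}
You propose to obtain it by controlling the second term via $P_\vc u(y)\le\hat u(y)$ (inequality (\ref{nearly-used-l})) and the first term by an argument ``analogous to'' Lemma \ref{PFuh}. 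But (\ref{nearly-used-l}) and Lemma \ref{PFuh} each bound a \emph{full} expectation ($E^y(u\circ X_{D_\vc})$ resp.\ $E^y(u\circ X_{T_F})$) by $\hat u(y)$; restricting each to its own event and adding the two resulting bounds yields only $2\hat u(y)$. Two separate estimates, each of size $\hat u(y)$, cannot be assembled into the joint bound $\hat u(y)$: the inequality must be proved for the single stopping time $T_F\wedge\tilde\tau$ in one stroke, and your sketch never does this.

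The paper closes exactly this gap with a device simpler than anything you attempt: it applies Lemma \ref{PFuh} once, to the union $A:=F\cup\vc$, for which $T_A=T_F\wedge T_\vc$ and $T_\vc=D_\vc$ on $[X_0\in V]$. The hypotheses of Lemma \ref{PFuh} do hold at every $y\in F\cap U\cap V$: by (\ref{base-prob}) and the Blumenthal zero--one law, $b(F\cup\vc)=b(F)\cup b(\vc)$, where $b(F)\cap U=\emptyset$ by Assumption \ref{ass} and $b(\vc)\subset\vc$ because $\vc$ is closed; moreover, $y$ is not finely isolated, since otherwise $y\in b(\{y\})\subset b(F)$, again contradicting $b(F)\cap U=\emptyset$. (Your reason --- that the process ``could not leave'' a finely isolated point --- is wrong: such a point is a holding point, which the process leaves by a jump.) With this single application of Lemma \ref{PFuh}, the bookkeeping you describe, namely $S_n+T_A\circ\theta_{S_n}=S_{n+1}$ on $\Omega_{n+1}$ and $S_n+T_A\circ\theta_{S_n}=\tau$ on $\Omega_n\setminus\Omega_{n+1}$, is exactly the paper's conclusion; in particular no claim that $P^y[T_F=\tilde\tau]=0$ is needed (nor is your argument for it valid), because ties are simply absorbed into the term $E^x(u\circ X_\tau;\Omega_n\setminus\Omega_{n+1})$.
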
 

\begin{proof}   Let  $T:= T_{F\cup \vc} $.
    By Lemma~\ref{PFuh}, 
    $E^y(u\circ X_T)\le \hat u(y)$ for every $y\in F\cap U$,
    and hence, by (\ref{FcapU}) and the strong Markov property, 
\begin{equation*} 
  E^x(\hat u\circ X_{S_n}; \Omega_n) \ge    E^x(E^{X_{S_n}} (u\circ X_T); \Omega_n)
  =E^x(u\circ X_T\circ \theta_{S_n}; \Omega_n)
\end{equation*} 
for every $n\ge 0$.
The proof is completed observing   
that $S_n+T\circ \theta_{S_n}=S_{n+1} $ on $\Omega_{n+1}$
and $S_n+T\circ \theta_{S_n}= \tau$   on~$\Omega_n\setminus \Omega_{n+1}$.
\end{proof}

\begin{proposition}\label{g-function} 
The function $g$ is a minorant of $u$. 
\end{proposition}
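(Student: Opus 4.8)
The plan is to split on whether $x$ lies in $U=\{u<\infty\}$: for $x\in X\setminus U$ one has $u(x)=\infty$, so $g(x)\le u(x)$ is automatic, and everything reduces to proving the pointwise bound $g(x)\le u(x)$ for $x\in U$ by a telescoping argument built on Lemma~\ref{g-tau}. First I would put $g$ into probabilistic form. By the definition of $P^F=\sumnn P_n^F$, Proposition~\ref{Proposition-PAn} and (\ref{FcapU}) (which gives $X_{S_n}\in F\cap U$ for $P^x$-a.e.\ $\omega\in\{S_n<\infty\}$ when $x\in U$, so that the factor $1_U$ is harmless and $u-\hat u$ is a finite nonnegative number on the relevant paths), one obtains
\[
   g(x)=\sumnn E^x\bigl((u-\hat u)\circ X_{S_n};\,S_n<\infty\bigr),\qquad x\in U.
\]

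Next I fix a relatively compact open $V\ni x$, set $\tau:=D_\vc$, $\Omega_n:=\{S_n<\tau\}$, and abbreviate $a_n:=E^x(u\circ X_{S_n};\Omega_n)$ and $b_n:=E^x(\hat u\circ X_{S_n};\Omega_n)$. Since $\hat u\le u$ we have $b_n\le a_n$, while Lemma~\ref{g-tau} gives $b_n\ge a_{n+1}+E^x(u\circ X_\tau;\Omega_n\setminus\Omega_{n+1})\ge a_{n+1}$; hence $(a_n)$ is decreasing. Because $S_0=D_F$ and $a_0=E^x(u\circ X_{D_F};D_F<\tau)\le P_Fu(x)\le u(x)<\infty$ by (\ref{nearly-used}) (note $F=\{\hat u<u\}\in\B$), all $a_n$ are finite, so each difference $a_n-b_n=E^x((u-\hat u)\circ X_{S_n};\Omega_n)$ is legitimate. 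Feeding the full inequality of Lemma~\ref{g-tau} into the telescoping sum and discarding the nonnegative terms $E^x(u\circ X_\tau;\Omega_n\setminus\Omega_{n+1})$ and $a_{N+1}$, I get, for every $N$,
\[
   \sum_{n=0}^{N} E^x\bigl((u-\hat u)\circ X_{S_n};\,\Omega_n\bigr)\le a_0-a_{N+1}\le a_0\le u(x).
\]

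Finally I let $V$ increase to $X$ along an exhaustion by relatively compact open sets. Then $\tau=D_\vc$ increases to the lifetime, and since $X_{S_n}\in X$ on $\{S_n<\infty\}$ forces $S_n$ to stay strictly below the lifetime there, $\Omega_n\uparrow\{S_n<\infty\}$; monotone convergence (the integrand $u-\hat u\ge 0$) upgrades the displayed bound to $\sum_{n=0}^N E^x((u-\hat u)\circ X_{S_n};\,S_n<\infty)\le u(x)$, and a further passage $N\to\infty$ yields exactly $g(x)\le u(x)$. The step I expect to be most delicate is this last limiting argument: one must ensure that the exit times $D_\vc$ genuinely exhaust the time interval on which the paths remain in $X$, so that $\Omega_n\uparrow\{S_n<\infty\}$ and no mass of $g$ is lost. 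The chain of monotonicities $a_{n+1}\le a_n\le a_0<\infty$ supplied by Lemma~\ref{g-tau} is precisely what keeps every intermediate expectation finite and justifies the subtractions throughout.
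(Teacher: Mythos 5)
Your proof is correct, and its core --- the telescoping estimate built on Lemma~\ref{g-tau}, anchored by $a_0\le P_Fu(x)\le u(x)<\infty$ from (\ref{nearly-used}) --- is exactly the paper's argument. The one real difference is that you invoke Lemma~\ref{g-tau} only for relatively compact open $V$ and therefore need the final exhaustion $V\uparrow X$, with the attendant claim that the exit times $D_{V^c}$ increase to the lifetime, so that $\Omega_n\uparrow\{S_n<\infty\}$. That limiting step is correct (on $\{S_n<\infty\}$ one has $X_{S_n}\in F\cap U\subset X$ by (\ref{FcapU}), hence $S_n$ precedes the lifetime, and a Hunt path has relatively compact range on compact time intervals before its lifetime), but it is avoidable: Lemma~\ref{g-tau} is stated for an \emph{arbitrary} open set $V\subset X$, and the paper simply takes $V:=X$. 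Then $\tau=D_\emptyset=\infty$, $X_\tau=\Delta$, and $u\circ X_\tau\equiv 0$ by the convention that functions on $X$ vanish at $\Delta$, so $\Omega_n=\{S_n<\infty\}$ from the outset and your displayed bound
\begin{equation*}
\sum_{n=0}^{N} E^x\bigl((u-\hat u)\circ X_{S_n};\,S_n<\infty\bigr)\le a_0=P_Fu(x)\le u(x)
\end{equation*}
holds with no passage to the limit over $V$ at all. In short, the step you single out as ``most delicate'' is precisely the one the paper's formulation of the lemma lets you skip; apart from that detour, the two proofs coincide.
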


\begin{proof} 
Let $x\in U$. By  Lemma \ref{g-tau}      with $V:=X$, $\tau=\infty$,  
\begin{equation*} 
g(x)\le  \sumnn\bigl( E^x(u\circ X_{S_n})-E^x(u\circ X_{S_{n+1}})\bigr)   \le  E^x(u\circ X_{S_0})= P_F u(x),
\end{equation*} 
where $P_F u(x)\le u(x)$, by (\ref{nearly-used}). 
\end{proof}

Finally,   we need the  following.   

\begin{lemma}\label{tech}
  
 Let $E$ be a Borel subset of $F$.   Then $v:= P^E1=P^F1_E$  and
 \begin{equation}\label{markov}
   P_Bv(x) = E^x(\sumnn 1_{E}\circ X_{S_n}\, 1_{[S_n\ge D_B]})\quad\mbox{for all $B\in \B$ and $x\in X$. }
\end{equation} 
\end{lemma}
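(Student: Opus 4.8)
The plan is to prove the two assertions in turn, the common engine being the \emph{enumeration of visits} supplied by (\ref{FcapU}) and (\ref{sge0}), combined with the strong Markov property. First I would record that, since $E\subset F$, monotonicity of the base gives $b(E)\subset b(F)$, whence $b(E)\cap U=\emptyset$ as well; therefore the arguments producing (\ref{FcapU}) and (\ref{sge0}) apply verbatim with $E$ in place of $F$. Writing $N_E:=\#\{s\ge 0\colon X_s\in E\}$ for the random number of visits to $E$, and abbreviating $S_n:=S_n^F$, the content is that, for $x\in U$, the finite members of $(S_n^E)_{n\ge0}$ and of $(S_n)_{n\ge0}$ strictly increase and exhaust $\{s\colon X_s\in E\}$, respectively $\{s\colon X_s\in F\}$, $P^x$-a.s.

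For the identity $P^E1=P^F1_E$ I would evaluate both sides as $E^x(N_E)$ for $x\in U$. By Proposition~\ref{Proposition-PAn} and the strict increase of the $S_n^E$,
$$P^E1(x)=\sumnn P^x[S_n^E<\infty]=E^x\bigl(\#\{n\colon S_n^E<\infty\}\bigr)=E^x(N_E),\qquad x\in U.$$
On the other hand, since $\{S_n\colon S_n<\infty\}=\{s\colon X_s\in F\}$ and $E\subset F$, the $F$-visits landing in $E$ are exactly the $E$-visits, so
$$P^F1_E(x)=E^x\Bigl(\sumnn 1_E\circ X_{S_n}\,1_{[S_n<\infty]}\Bigr)=E^x(N_E),\qquad x\in U.$$
Both computations rely on the enumerations containing no repetitions, which is precisely the total thinness of $F\cap U$ forced by Assumption~\ref{ass}. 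This yields $v=P^E1=P^F1_E$ on $U$.

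For (\ref{markov}) I would apply the strong Markov property at $D_B$. Since $v(y)=E^y(N_E)$ and, by Lemma~\ref{fusc}, the process started in $U$ never leaves $U$ (so $X_{D_B}\in U$ $P^x$-a.s.\ on $[D_B<\infty]$ for $x\in U$),
$$P_Bv(x)=E^x(v\circ X_{D_B})=E^x\bigl(E^{X_{D_B}}(N_E)\bigr)=E^x(N_E\circ\theta_{D_B}),$$
both sides vanishing on $[D_B=\infty]$, where $X_{D_B}=\Delta$. Finally $N_E\circ\theta_{D_B}=\#\{t\ge D_B\colon X_t\in E\}$, and by the enumeration the $F$-hits occurring at times $\ge D_B$ and landing in $E$ are exactly these visits; hence $N_E\circ\theta_{D_B}=\sumnn 1_E\circ X_{S_n}\,1_{[S_n\ge D_B]}$, which is (\ref{markov}).

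The step I expect to require the most care is the path-by-path identity $N_E\circ\theta_{D_B}=\sumnn 1_E\circ X_{S_n}\,1_{[S_n\ge D_B]}$: it rests on (\ref{sge0}) being a genuine, repetition-free enumeration, so that the post-$D_B$ hitting times $S_n^F\circ\theta_{D_B}+D_B$ reindex faithfully onto the tail $\{S_n\ge D_B\}$ with no immediate re-entries swallowing later $E$-visits. This is exactly where $b(F)\cap U=\emptyset$ and Lemma~\ref{fusc} are indispensable. The only remaining point is to dispose of the boundary cases $[D_B=\infty]$ (both sides are $0$) and $x\in X\setminus U$, the latter handled by the same strong Markov reduction once the representation $v=E^{\cdot}(N_E)$ is checked there as well.
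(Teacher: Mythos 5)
Your argument is correct for $x\in U$ and is essentially the paper's own proof: both rest on extending the repetition-free enumeration (\ref{FcapU}), (\ref{sge0}) from $F$ to $E$ (legitimate via $b(E)\subset b(F)$, hence $b(E)\cap U=\emptyset$, plus the pathwise domination of $E$-visits by $F$-visits, which rules out finite accumulation of the $S_n^E$ by Assumption \ref{ass}), and on the strong Markov property at $D_B$ combined with Lemma \ref{fusc} to guarantee $X_{D_B}\in U$ almost surely. The only organizational difference is immaterial: you prove $P^E1=P^F1_E$ directly, identifying both with the expected number of visits to $E$, whereas the paper reads this identity off from (\ref{markov}) by taking $B=X$.

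One warning about your closing sentence: the case $x\in X\setminus U$ cannot be ``handled by the same strong Markov reduction once the representation is checked there'', because at such points the lemma itself may fail. If $x\in b(E)\setminus E$ (a set contained in $X\setminus U$, but possibly nonempty), then $S_n^F=0$ $P^x$-a.s.\ for every $n$, so the right-hand side of (\ref{markov}) with $B=X$ equals $0$, while the left-hand side is $v(x)=P^E1(x)=\infty$; note that your representation $v(x)=E^x(N_E)=\infty$ actually \emph{does} hold there, and what breaks is the pathwise identification of $N_E$ with $\sum_n 1_E\circ X_{S_n}$, i.e.\ (\ref{sge0}), since the iterates $S_n^F$ all collapse to $0$. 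Such points are compatible with Assumption \ref{ass}: for space-time Brownian motion on $\real^3\times\real$, take $A_n$ the closed ball of radius $\sqrt{(3/n)\log\log n}$ in $\real^3\times\{1/n\}$, $E:=\bigcup_n A_n$, and $u:=\infty\cdot 1_{\{(0,0)\}}+\sum_n 2^{-n}\bigl(1_{A_n}+1_{\real^3\times(1/n,\infty)}\bigr)$; then $u\in\N\cap\B^+(X)$ (the singleton is polar), $F=\{(0,0)\}\cup E$, Assumption \ref{ass} holds, and $(0,0)\in b(E)\setminus E$ by the law of the iterated logarithm. This defect, however, lies in the paper's formulation rather than in your argument: the paper's proof likewise uses (\ref{sge0}) and the inclusion $X_{S_n^E}\in E$ on $[S_n^E<\infty]$, both available only for starting points in $U$, and the lemma is only ever applied at points of $U$. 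For the purpose the lemma serves, your proof is complete.
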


\begin{proof}  
 Let $T_n:=S_n^E$, $n\ge 0$. By Proposition \ref{PAn}  and the strong Markov property,
 \begin{eqnarray*} 
&&P_Bv(x)=E^x(E^{X_{D_B}}(\sumnn 1_{E}\circ X_{T_n}))\\
           & =&E^x(\sumnn 1_{E}\circ X_{T_n}\circ \theta_{D_B})
           =E^x(\sumnn 1_{E}\circ X_{D_B+{T_n\circ \theta_{D_B}}}),
\end{eqnarray*} 
where, for $P^x$-almost every $\omega\in \Omega$, the last sum
is the number of all $s\ge D_B(\omega)$ such that $X_s(\omega)\in E$, which
in turn is the sum in (\ref{markov}); see (\ref{sge0}).
The equality $v=P^F1_E$ follows taking $B=X$.
\end{proof}

\begin{proof}[Proof of Theorem \ref{main-special}]   
 There  are  Borel  sets   $F_k$ in  $F $     and $a_k\in (0,\infty)$, $k\in\nat$, with
 \begin{equation}\label{choice-Fk}
 1_Uu-1_U\hat u=\sumk a_k 1_{F_k}.
\end{equation}
   (Indeed, choosing step functions    $f_n=\sum_{j=2}^{m_n} a_{nj} 1_{F_{nj}}\in \B(X)$ 
    with $a_{nj}>0$ which are increasing to $f:=1_Uu-1_U\hat u$,  it suffices to observe that
  $f$ is the sum of $f_1$ and the step functions $f_{n+1}-f_n$, $n\in\nat$.)
By Lemma \ref{tech},  $v_k:=P^{F_k}1= P^E1_{F_k}$ for every $k\in\nat$. 
Hence, by Proposition \ref{Rvav}, 
\begin{equation}\label{gg1}
                     g_1:=\sumk a_k v_k \in \R(F) \und g_1=g \on U.
\end{equation} 
So $g\in \N\cap \B(X)$; 
see (\ref{FcapU}). 
 Further,  using  \cite[(2.4) and Proposition~2.3]{HN-mertens}, 
\begin{equation*}
                     g=g_1=\sumk a_k(1_{F_k}+ \hat v_k^f)  =\sumk a_k1_{F_k}+\hat g_1^f=(u-\hat u)+\hat g \on U.    
\end{equation*}

Next let $V\in \U_c$,  $x\in V\cap U$ and $\tau:=D_\vc$.  
By Lemma \ref{tech}, 
\begin{equation*} 
g(x)-P_\tau g(x)=g_1(x)-P_\tau g_1(x)= \sumnn E^x((u-\hat u)\circ X_{S_n}; S_n<\tau)\ge 0
\end{equation*} 
(showing once more that $g$ is nearly hyperharmonic) and, 
  using Lemma~\ref{g-tau},      
\begin{equation*}   
g(x)-P_\tau g(x)\le E^x(u\circ X_{S_0}; S_0<\tau)-\sumnn E^x(u\circ X_\tau; S_n<\tau\le S_{n+1}),
\end{equation*} 
where the sum is equal to $P_\tau u(x)-E^x(u\circ X_\tau;\tau\le S_0)$. Therefore
\begin{equation*} 
 u_0(x)-P_\tau u_0(x)=u(x)-P_\tau u(x) -(g(x)-P_\tau g(x))
 \ge u(x)- E^x(u\circ X_{S_0\wedge \tau}).
 \end{equation*} 
 Since $S_0\wedge \tau=D_{F\cup \vc}$ and $P_{F\cup\vc}u\le u$, 
 by (\ref{nearly-used}), we see  that
 $u_0(x)-P_\tau u_0(x)\ge 0$, that is, $u_0\in \N$.

  Since   $\hat g+\hat u_0=\widehat{g+u_0}=\hat u$,
  by \cite[Proposition 2.3]{HN-mertens}, we finally conclude that  
$\hat u_0=u_0$ on $U$ 
 and  $u_1:=g_1+\hat u_0\in \R(F)$,  $u_1=g+u_0=u$ on~$U$.
\end{proof}

\begin{corollary}\label{corollary-special}
If $w\in\es$ such that $w=\infty$ on $\{u=\infty\}$, then $u+w\in \R(F)$.
\end{corollary}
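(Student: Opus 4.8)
The plan is to exploit the function $u_1\in\R(F)$ furnished by Theorem~\ref{main-special}, which satisfies $u_1=u$ on $U=\{u<\infty\}$. The point is that $u_1$ already realizes $u$ as an element of $\R(F)$ \emph{on} $U$, while off $U$ the two functions may differ; the role of $w$, which is forced to be infinite precisely on $X\setminus U$, is to repair this discrepancy. Concretely, I would show that $u+w$ coincides pointwise with $u_1+w$ on all of $X$ and that the latter lies in $\R(F)$.

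First I would observe that $\R(F)$ is stable under adding a function in $\es$. Indeed, by definition an element of $\R(F)$ is the sum of a single function in $\es$ and countably many functions $P_Bw'$ with $B\in\B$, $B\subset F$, and $w'\in\es$ bounded; adding $w\in\es$ merely modifies the $\es$-summand and keeps the total in $\R(F)$. Hence $u_1+w\in\R(F)$.

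Next I would verify $u+w=u_1+w$ everywhere, splitting $X$ into $U$ and its complement. On $U$ we have $u=u_1$ by Theorem~\ref{main-special}, so the two sides agree. On $X\setminus U=\{u=\infty\}$ the hypothesis gives $w=\infty$, and since $u_1\ge 0$ (indeed $u_1\in\R(F)\subset\B^+(X)$), both $u+w$ and $u_1+w$ equal $\infty$ there. Therefore $u+w=u_1+w\in\R(F)$, which is the assertion.

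I do not expect a genuine obstacle: the substance is entirely contained in Theorem~\ref{main-special}, and the corollary is a clean repackaging of it. The only point requiring care is the behaviour off $U$, where $u$ and $u_1$ need not coincide; it is exactly the assumption $w=\infty$ on $\{u=\infty\}$ that neutralizes this, forcing $u+w$ and $u_1+w$ to agree (both being infinite) on $X\setminus U$.
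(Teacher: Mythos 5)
Your proof is correct and takes essentially the same route as the paper: its one-line proof also invokes Theorem~\ref{main-special} to write $u+w$ as an element of $\R(F)$ built from $u_1$ (equivalently $g_1+\hat u_0$) plus $w$, relying implicitly on exactly the two observations you make explicit --- that $\R(F)$ absorbs an extra summand from $\es$, and that $w=\infty$ on $\{u=\infty\}$ forces $u+w$ and $u_1+w$ to agree off $U$. Your write-up merely spells out the pointwise verification on $U$ and on $X\setminus U$ that the paper leaves tacit.
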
 

\begin{proof} 
By Theorem \ref{main-special}, $u+w=u_1+g_1+w\in \R(F)$.
\end{proof}

\section{The general case}\label{sec-general}

We first reduce the general case of a nearly hyperharmonic function $u\in \B^+(X)$
to the special one considered in the previous section.

\begin{proposition}\label{key-reduction}    
Let $u\in\N\cap \B(X)$ with $\inf u(X) >0$. 
Further,  let $\eta\in (0,1)$,
\begin{equation*}
               F:=\{\hat u< \eta u\} \und v:=1_Fu+1_{X\setminus F} \hat u.
             \end{equation*}
             Then  $\eta u\le v\le u$, $v\in \N\cap \B(X)$, $F=\{\hat v<v\}$    
             and {\bfseries $P^F1< \infty$ on $\{\hat v<\infty\}$. }
\end{proposition}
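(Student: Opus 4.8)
The plan is to dispatch the three structural assertions directly and then to isolate the expected-number-of-visits bound as the real content. First I would check $\eta u\le v\le u$: on $F$ we have $v=u$, and $\eta<1$, $u\ge 0$ give $\eta u\le u=v$, while on $X\setminus F=\{\hat u\ge\eta u\}$ we have $v=\hat u$, so $\eta u\le\hat u=v\le u$ using $\hat u\le u$. Since $F=\{\hat u<\eta u\}\in\B$ (both $\hat u$ and $u$ are Borel), $v=1_Fu+1_{X\setminus F}\hat u\in\B(X)$. For $v\in\N$ I would verify $P_\vc v\le v$ on $V$ for every $V\in\U_c$ by a two-line case split: $v\le u$ gives $P_\vc v\le P_\vc u$; if $x\in F$ then $v(x)=u(x)$ and $P_\vc u(x)\le u(x)$ by \eqref{nearly-used}; if $x\in V\setminus F$ then $v(x)=\hat u(x)$ and $P_\vc u(x)\le\hat u(x)$ by \eqref{nearly-used-l}. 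Finally $\hat v=\hat u$: indeed $v\ge\hat u$ everywhere (on $F$ because $v=u\ge\hat u$, off $F$ because $v=\hat u$), and $\hat u$ is lower semicontinuous, so $\hat u\le\hat v$; and $v\le u$ gives $\hat v\le\hat u$. Hence $\{\hat v<v\}=\{\hat u<v\}=F$, since on $F$ one has $v=u>\hat u$ (here $u\ge\inf u(X)>0$ and $\eta<1$), while off $F$ one has $v=\hat u$. In particular $\{\hat v<\infty\}=\{\hat u<\infty\}$, so the final assertion is ``$P^F1<\infty$ on $\{\hat u<\infty\}$''.

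For that estimate the engine is a geometric decay driven by the uniform multiplicative gap $\hat u\le\eta u$ on $F$. Writing $\tilde S_1:=T_F$, $\tilde S_{n+1}:=\tilde S_n+T_F\circ\theta_{\tilde S_n}$ for the strictly-positive iterated hitting times of $F$ and $\tilde a_n(x):=E^x(u\circ X_{\tilde S_n};\tilde S_n<\infty)$, I would argue as follows. For a point $y\in F$ that is neither in $b(F)$ nor finely isolated, Lemma \ref{PFuh} gives $\hat P_Fu(y)=E^y(u\circ X_{T_F})\le\hat u(y)\le\eta u(y)$. Feeding this into the strong Markov property yields $\tilde a_{n+1}(x)\le\eta\tilde a_n(x)$, while $\tilde a_1(x)=\hat P_Fu(x)\le\hat u(x)$; summing the resulting geometric series and using $u\ge c:=\inf u(X)>0$ gives $\sum_{n\ge1}P^x[\tilde S_n<\infty]\le\hat u(x)/(c(1-\eta))$. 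Rewriting $P^F1$ through the $\tilde S_n$ (with one extra unit visit at time $0$ when $x\in F$) then bounds $P^F1(x)\le 1+\hat u(x)/(c(1-\eta))$, which is finite exactly when $\hat u(x)<\infty$.

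The main obstacle is that this computation presupposes $x\notin b(F)$ and that every visit point $X_{\tilde S_n}$ lies in $F$ rather than merely in the fine closure $F\cup b(F)$, since only on $F$ is the gap $\hat u\le\eta u$ available and only off $b(F)$ does Lemma \ref{PFuh} apply (a finely isolated point of $F$ would be held by the process and hence lie in $b(F)$, so on $F\setminus b(F)$ no point is finely isolated). Everything therefore reduces to the inclusion $b(F)\subseteq\{\hat u=\infty\}$. Granting it, the standard fact that for $w\in\es$ the inequality $w(x)\ge E^x(w\circ X_T;T<\infty)$ forces $P^x[X_T\in\{w=\infty\}]=0$ shows that, started in $\{\hat u<\infty\}$, the process a.s.\ never enters $b(F)$; hence $X_{\tilde S_n}\in F$ throughout and the geometric argument runs verbatim.

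I expect proving $b(F)\subseteq\{\hat u=\infty\}$ to be the delicate point, and it is here that the factor $\eta$ is indispensable (without a uniform gap the conclusion is false, as the space-time example with a dense family of hyperplanes shows). The mechanism I would use is that each visit to $F$ injects excess $u-\hat u>(1-\eta)c$, whereas the accumulated expected excess is capped by an excessive majorant: telescoping the inequality $E^y(\hat u\circ X_{S_n^F})\ge E^y(u\circ X_{S_{n+1}^F})$ from Lemma \ref{g-tau} bounds the expected number of $F$-visits by $\hat P_Fu\le\hat u$, up to the factor $(1-\eta)c$. Concretely I would compare the two excessive functions $\hat u$ and $\hat v_0:=P^F1-1_F\in\es$ (the expected number of strictly-positive-time visits to $F$, excessive by Proposition \ref{Rvav}): the geometric estimate yields $c(1-\eta)\hat v_0\le\hat u$ off $b(F)$, and since $\hat v_0=\infty$ on $b(F)$ one gets $\hat u=\infty$ there. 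The technical care required is that at base points the iterated times $S_n^F$ collapse to $0$ and no longer enumerate the genuine small-time visits, so the comparison must be pushed onto $b(F)$ by fine continuity or by approximating $b(F)$ from its exterior, rather than read off pointwise.
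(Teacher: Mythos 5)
Your first paragraph (the elementary assertions $\eta u\le v\le u$, $v\in\N\cap\B(X)$, $\hat v=\hat u$, $F=\{\hat v<v\}$) is correct and matches the paper, and your geometric-decay strategy is the probabilistic shadow of the paper's computation: by Proposition \ref{Proposition-PAn}, the paper's iterated kernels $P_F(\hat P_F)^n$ count exactly the visits you count with the times $\tilde S_n$. The genuine gap is precisely at the point you flag as delicate, the inclusion $b(F)\subseteq\{\hat u=\infty\}$, and your argument for it is circular. To run the geometric estimate (or the telescoping from Lemma \ref{g-tau}) at any point $x\notin b(F)$, you must know that the visit points $X_{\tilde S_n}$, which a priori lie only in the fine closure $F\cup b(F)$, actually lie in $F\setminus b(F)$, since Lemma \ref{PFuh} is simply false at base points ($T_F=0$ there); and your only mechanism for excluding visits to $b(F)$ is the inclusion $b(F)\subseteq\{\hat u=\infty\}$ that you are trying to prove. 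So the estimate $c(1-\eta)\hat v_0\le\hat u$ ``off $b(F)$'' is not available as an independent input. Moreover, the proposed rescue --- pushing the comparison onto $b(F)$ ``by fine continuity or by approximating $b(F)$ from its exterior'' --- only reaches points of $b(F)$ lying in the fine closure of $X\setminus b(F)$; it says nothing about the fine interior of $b(F)$, and at this stage of the argument nothing rules out that this fine interior is nonempty (its smallness is essentially a consequence of the theorem being proved, not an available ingredient).

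What closes the gap --- and is the actual key step of the paper's proof --- is an observation that makes $b(F)$ harmless instead of a case to be excluded. By Lemma \ref{fusc}, $u$ is finely upper semicontinuous, so $A:=\{\hat u\le\eta u\}$ is finely closed; since $A\supset F$, the set $A$ contains the fine closure $F\cup b(F)$, and every measure $P_F(x,\cdot)$ is supported by that fine closure. Hence $P_F\hat u\le\eta P_Fu\le\eta u$ holds pointwise on all of $X$ by (\ref{nearly-used}), and regularizing this inequality via (\ref{PPhat}) gives $\hat P_F\hat u\le\eta\hat u$ \emph{everywhere} --- in particular at $x\in b(F)$, where $\hat P_F\hat u(x)=\hat u(x)$, so that $\hat u(x)\le\eta\hat u(x)$ and therefore $\hat u(x)=\infty$, because $\hat u\ge\inf u(X)>0$. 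This single analytic step (fine closedness plus regularization of the reduite) simultaneously yields the inclusion you need and the one-step decay at every point; iterating, $(\hat P_F)^n\hat u\le\eta^n\hat u$, and then $a\,P_F(\hat P_F)^n1\le P_F(\hat P_F)^n\hat u\le\eta^n\hat u$ sums to $a\,P^F1\le\hat u/(1-\eta)<\infty$ on $\{\hat u<\infty\}$, with no appeal to Lemma \ref{PFuh}, no exclusion of base points, and no issue with finely isolated points. If you want to retain your probabilistic formulation, you still need this regularization argument (or an equivalent substitute) as input; without it, the proposal does not close.
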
 
 
\begin{proof} Of course, $\hat u\le v\le u$, hence  $\hat v=\hat u$
and  $v\in \N$, by~\cite[Proposition 2.2]{HN-mertens}.  
Clearly, $F=\{\hat v<v\}$ 
and   $a:=\inf \hat v(X)=\inf u(X)>0$.

  The set $ A:=\{\hat u\le \eta u\}$ containing $F$     
is finely closed, since $u$ is finely upper semicontinuous, by Lemma \ref{fusc}.  So, for~every $x\in X$, 
the measure~$P_F(x,\cdot)$ is supported by  $A$,   and hence
\begin{equation*} 
    P_F\hat u(x)  \le \eta P_F u(x)\le \eta u(x).
\end{equation*} 
By regularization, 
$         \hat P_F\hat u\le \eta\hat u$  (see (\ref{PPhat})), that is, $\hat P_F\hat v\le \eta \hat v$.  
  By induction, for every $n\ge 1$, $(\hat P_F)^n\hat v\le \eta^n \hat v$, and therefore
\begin{equation*}
  a P_F(\hat P_F)^n1\le P_F(\hat P_F)^n\hat v\le \eta^n P_F\hat v\le \eta^n\hat v.
  \end{equation*} 
 Thus $P^F1<\infty$ on $\{\hat v<\infty\}$. 
 \end{proof}

Let us say that a   function $u\in \N$ has the \emph{finiteness property} (FP) if, for 
every $x\in X$ with $u(x)<\infty$, there exists a~function $w\in \es$ such that $w=\infty$ on 
$\{u=\infty\}$ and $w(x)<\infty$.  
Trivially, every $u\in\N$ with $u<\infty$ has this property (take $w=0$). 

\begin{theorem}\label{general-1}
Let $u\in \N$ be Borel measurable satisfying {\rm (FP)}. Then $u=R_u$.
\end{theorem}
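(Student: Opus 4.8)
The goal is to prove $u = R_u$ for a Borel measurable $u\in\N$ satisfying (FP). The plan is to reduce the general statement to the special case already handled in Theorem~\ref{main-special} via the reduction furnished by Proposition~\ref{key-reduction}, and to exploit the observation highlighted in the introduction: it suffices to establish, for each $\eta\in(0,1)$, the pointwise inequality $u\ge \eta R_u$, since letting $\eta\uparrow 1$ then yields $u\ge R_u$, while $u\le R_u$ is trivial (every excessive majorant of $u$ dominates $u$, and $R_u$ is defined as their infimum; in fact $u\le R_u$ holds because $u\in\N$). So the whole content lies in the lower bound.

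\textbf{Reduction to the special case.} First I would reduce to $\inf u(X)>0$: replacing $u$ by $u+c$ for a constant $c>0$ (or more safely, by $u+\eta' v$ for a suitable strictly positive $v\in\es\cap\C(X)$ supplied by (T), so as to stay inside $\N\cap\B(X)$) changes $R_u$ in a controlled way, and one recovers the claim for $u$ by letting the perturbation tend to $0$. Granting $\inf u(X)>0$ and fixing $\eta\in(0,1)$, I would set $F:=\{\hat u<\eta u\}$ and $v:=1_F u+1_{X\sms F}\hat u$ exactly as in Proposition~\ref{key-reduction}. That proposition gives $\eta u\le v\le u$, $v\in\N\cap\B(X)$, $\{\hat v<v\}=F$, and crucially $P^F 1<\infty$ on $\{\hat v<\infty\}$. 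Thus $v$ very nearly satisfies Assumption~\ref{ass}, the hypothesis of Theorem~\ref{main-special}, with its exceptional set being exactly $F$; the only gap is that Assumption~\ref{ass} requires finiteness of $P^F1$ on $U=\{v<\infty\}$ rather than on $\{\hat v<\infty\}$, but since $\hat v\le v$ we have $\{v<\infty\}\subset\{\hat v<\infty\}$, so the finiteness holds on $\{v<\infty\}$ as needed.

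\textbf{Applying the special case and using (FP).} Here is where (FP) enters. I cannot directly apply Theorem~\ref{main-special} to conclude $v=R_v$ because that theorem only yields $v=R_v$ on the finiteness set, via the conclusion $u_0\in\es$ and $u=R_u$ \emph{when $u<\infty$}; the possibility $v(x)=\infty$ must be absorbed. The mechanism is Corollary~\ref{corollary-special}: by (FP), for each $x$ with $u(x)<\infty$ there is $w\in\es$ with $w=\infty$ on $\{u=\infty\}\supset\{v=\infty\}$ and $w(x)<\infty$, whence $v+w\in\R(F)$, so by \eqref{NR} we get $v+w=R_{v+w}=\inf\{s\in\es: s\ge v+w\}$. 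Evaluating at $x$ and using that $R$-reduction splits appropriately, this forces $R_v(x)\le v(x)+$ (arbitrarily small excess), giving $R_v(x)=v(x)$ at every finite point, and the equality is trivial where $v=\infty$. Combining $R_v=v$ with $\eta u\le v\le u$ yields $\eta u\le v\le R_u$ wherever convenient; more precisely, since $v\ge\eta u$ and $v=R_v\in$ (infimum of excessive majorants of $v$, hence of $\eta u$), one obtains $u\ge v\ge\eta R_u$ pointwise.

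\textbf{Main obstacle.} The delicate step I expect to be hardest is the clean passage from Theorem~\ref{main-special}/Corollary~\ref{corollary-special}, which speak about membership in $\R(F)$ and equalities \emph{on $U$}, to the \emph{global} pointwise identity $R_v=v$, correctly handling the set $\{v=\infty\}$ through (FP); this is exactly the point where the finiteness property does real work and where one must verify that adding $w\in\es$ (infinite on $\{v=\infty\}$, finite at the chosen $x$) does not disturb the reduction $R_{v+w}=v+w$ in a way that spoils the estimate at $x$. Once $R_v=v$ is secured for each $\eta$, letting $\eta\uparrow 1$ (so that $v\uparrow$ something dominating $u$, using monotone stability of $\N$ and of reductions) and removing the strict-positivity normalization are routine, and $u=R_u$ follows.
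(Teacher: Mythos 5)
Your proposal is correct and follows essentially the same route as the paper's own proof: perturb $u$ by a small positive quantity to get $\inf u(X)>0$, apply Proposition~\ref{key-reduction} to obtain $v$ with $\eta u\le v\le u$ satisfying Assumption~\ref{ass}, use (FP) together with Corollary~\ref{corollary-special} and (\ref{NR}) to get $R_{v+w}=v+w$ for a scaled $w\in\es$ infinite on $\{u=\infty\}$ and small at the fixed point, and conclude by monotonicity of reductions and letting the parameters tend to their limits. The only (harmless) difference is bookkeeping: you pass through the intermediate identity $R_v=v$, whereas the paper estimates $\eta R_u\le R_{v+w_1}=v+w_1\le u+\ve+w_1$ directly; the step you flag as the ``main obstacle'' is in fact just monotonicity $R_v\le R_{v+w}$, so there is no gap.
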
 

\begin{proof}  
Let $x\in X$, $\eta\in (0,1)$ and $\ve>0$.  Of course, $u+\ve\in\N$ and  $u+\ve$ satisfies~(FP). 
By Proposition \ref{key-reduction}, there exists $v\in \N\cap \B(X)$ satisfying Assumption \ref{ass} 
and such that 
$\eta(u+\ve)\le v\le u+\ve$.  
 We  choose $w_1\in \es$ such that $w_1=\infty$ on~$\{u=\infty\}$ and $w_1(x)<\ve$. 
Then $v+w_1\in \R(\{\hat v<v\})$,  by Corollary \ref{corollary-special}. Thus, by (\ref{NR}), 
   $\eta R_u\le  R_{v+w_1}=v+w_1\le u+\ve+w_1$. In particular, $\eta R_u(x)\le u(x)+2\ve$.
\end{proof} 

The following consequence of Theorem \ref{general-1} may be surprising. 
Its combination with Theorem \ref{general-1}
establishes  the implication (1)\,$\Rightarrow$\,(2) in Theorem \ref{main}.

\begin{corollary}\label{u-infty-infty}
 Every Borel measurable $u\in\N$ has the property {\rm (FP)}.
\end{corollary}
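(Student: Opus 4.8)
The plan is to bootstrap Theorem~\ref{general-1}. Although we do not yet know that an arbitrary Borel measurable $u\in\N$ has~(FP), every truncation of $u$ by a finite excessive function trivially does, and applying Theorem~\ref{general-1} to these truncations will supply the excessive majorants out of which the required function can be assembled.

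First I would fix $u\in\N\cap\B(X)$, set $E:=\{u=\infty\}$, and fix $x_0$ with $u(x_0)<\infty$ (if no such $x_0$ exists, (FP) holds vacuously). By (T), choose a strictly positive, hence real-valued, $w_0\in\es\cap\C(X)$. For each $m\in\nat$ put $u^{(m)}:=u\wedge(mw_0)$. Since $\N$ is stable under infima and $mw_0\in\es\subset\N$, we have $u^{(m)}\in\N$; moreover $u^{(m)}$ is Borel measurable and finite, so it trivially satisfies~(FP). Hence Theorem~\ref{general-1} yields $u^{(m)}=R_{u^{(m)}}$, and in particular $R_{u^{(m)}}(x_0)=u^{(m)}(x_0)\le u(x_0)<\infty$. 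Choosing an excessive majorant close to this infimum, I obtain $w_m\in\es$ with $w_m\ge u^{(m)}$ and $w_m(x_0)\le u(x_0)+1$.

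The decisive observation is that on $E$ one has $u^{(m)}=mw_0$, so that $w_m\ge mw_0$ on $E$, while $w_m(x_0)$ remains bounded by $u(x_0)+1$ \emph{uniformly} in $m$. I would then set $w:=\sum_{m\ge1}m^{-2}w_m$, which lies in $\es$ because $\es$, being a convex cone stable under increasing pointwise limits, is closed under countable sums. At $x_0$ we get $w(x_0)\le(u(x_0)+1)\sum_{m\ge1}m^{-2}<\infty$, whereas on $E$ we get $w\ge\sum_{m\ge1}m^{-2}(mw_0)=w_0\sum_{m\ge1}m^{-1}=\infty$, since $w_0>0$ there. Thus $w=\infty$ on $\{u=\infty\}$ and $w(x_0)<\infty$, which is exactly the assertion of~(FP) at $x_0$; as $x_0$ was arbitrary in $\{u<\infty\}$, this proves the corollary.

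The only delicate point — the place where a naive argument would break down — is the \emph{simultaneous} demand that $w$ be $+\infty$ on all of $E$ yet finite at $x_0$. The resolution is the choice of weights $m^{-2}$: summability $\sum_m m^{-2}<\infty$ forces $w(x_0)<\infty$ from the uniform bound on $w_m(x_0)$, while divergence of $\sum_m m\cdot m^{-2}=\sum_m m^{-1}$ forces the blow-up on $E$ from $w_m\ge mw_0$. I expect no further obstacle; the genuinely essential input is merely that truncating $u$ by the increasing sequence $mw_0$ of \emph{finite} excessive functions (rather than, say, by constants, which need not be excessive) produces majorants whose growth on $E$ is controlled from below by $mw_0$.
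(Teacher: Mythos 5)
Your proof is correct. Every step checks out: the truncations $u\wedge(mw_0)$ are Borel, belong to $\N$ (stability under infima, $mw_0\in\es\subset\N$), and are finite since $w_0\in\es\cap\C(X)$ is real-valued, so Theorem~\ref{general-1} applies to them; the uniform bound $w_m(x_0)\le u(x_0)+1$ follows from $R_{u^{(m)}}(x_0)=u^{(m)}(x_0)\le u(x_0)$; and the weighted sum $\sum_m m^{-2}w_m$ is excessive, finite at $x_0$, and $\ge w_0\sum_m m^{-1}=\infty$ on $E$ because $w_m\ge mw_0$ there.

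The paper takes a shorter route built on the same bootstrap idea, but with a different auxiliary function: instead of truncating $u$, it observes that the indicator of the infinity set itself is nearly hyperharmonic, namely $1_E=1\wedge\inf_{n\in\nat}(u/n)\in\N$, bounded and Borel. A single application of Theorem~\ref{general-1} to $1_E$ then gives, at a point $x$ with $u(x)<\infty$ (so $1_E(x)=0$), majorants $w_n\in\es$ with $w_n\ge 1_E$ and $w_n(x)<2^{-n}$; the unweighted sum $w=\sum_n w_n$ is $\infty$ on $E$ (each term is $\ge 1$ there) and $<1$ at $x$. Comparing the two: the paper's argument needs only that constants are nearly hyperharmonic and uses one application of the theorem with a geometric bound, whereas yours uses countably many applications, the transience axiom (T) to produce $w_0$, and the convergent-versus-divergent weights $m^{-2}$ against $m^{-1}$. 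What your version buys is independence from the observation that $1_E\in\N$ (you only ever feed genuinely truncated copies of $u$ into the theorem); what the paper's buys is brevity and fewer hypotheses invoked. Both hinge on the same essential insight — that Theorem~\ref{general-1}, proved only under (FP), can be applied to finite members of $\N$ manufactured from $u$ to establish (FP) for $u$ itself.
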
 

\begin{proof}  
Let $u\in \N$ be Borel measurable, 
 $x\in X$ with $u(x)<\infty$ and  $E:=\{u=\infty\}$. 
Clearly, $1_E=1\wedge \inf_{n\in\nat} (u/n)\in \N$.
By Theorem~\ref{general-1}, there are $w_n\in\es$, $n\in\nat$, such that 
 \begin{equation*} 
 w_n\ge 1_E   \und w_n(x)<2^{-n}. 
 \end{equation*} 
Then $w:=\sumn w_n\in\es$, $w=\infty$ on $E$  and $w(x)<1$. 
\end{proof}

To prove the additional statement in Theorem \ref{main}, let us consider $\vp\in \B^+(X)$   
and recall that $N_\vp:=\inf\{u\in \N\colon u\ge \vp\}$ is the smallest nearly hyperharmonic
majorant of $\vp$, $N_\vp=\vp\vee \hat N_\vp\in \B^+(X)$ (see \cite[Proposition 2.4]{HN-mertens}),
and hence 
\begin{equation*} 
                              N_\vp=\inf\{w\in\es\colon w\ge N_\vp\} \ge \inf \{w\in \es\colon  w\ge \vp\}=R_\vp\ge N_\vp.
\end{equation*}

\section{The remaining part of Theorem \ref{main}}\label{remaining}

For a proof of the implication (1)\,$\Rightarrow$\,(3) in Theorem \ref{main} we note that,
for every $u\in\N\cap \B(X)$, the set $\{\hat u<u\}$ is semipolar   
(which, for example, follows from $u=\inf\{w\in \es\colon w\ge u\}$)
 and that every  semipolar Borel set is the union of   compacts $K_n$, $n\in\nat$,
and a~polar set $E\in \B$; see~\cite[Proposition 5.2]{HN-mertens} in connection with Remark \ref{erratum} below. 
We start with a lemma on~compact sets which will quickly lead to the basic approximation result in Corollary \ref{NRmu}.

\begin{lemma}\label{mu-K-proposition}
Let $K$ be a compact in $X$ and let $w$ be a bounded function in $\es$. 
Then there exists a decreasing sequence $(V_n)$ of finely open Borel sets containing~$K$ such that 
$P_Kw=\inf_{n\in\nat} P_{V_n}w$.
\end{lemma}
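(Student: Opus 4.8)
The plan is to prove the two inequalities separately, exploiting that on $K$ the asserted identity is automatic. By (\ref{PPhat}) and (\ref{PDT}), $P_Kw=R_w^K$ equals $w$ on $K$ and equals $v:=\hat R_w^K=\hat P_Kw$ on $X\setminus K$, where $v\in\es$ is finely continuous; moreover, since $T_K=0$ $P^y$-a.s.\ for $y\in b(K)$ by (\ref{base-prob}), we have $v=w$ on $b(K)$. For any finely open $V\supseteq K$ one has $D_V=0$ $P^x$-a.s.\ for $x\in K$, so $P_Vw=w=P_Kw$ there; hence it remains to construct decreasing finely open Borel sets $V_n$ with $K\subseteq V_n$ and $P_{V_n}w(x)\to v(x)$ for $x\in X\setminus K$.

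The natural candidate is $V_n:=\{w<v+1/n\}\cup K$. The set $\{w<v+1/n\}$ is finely open because $w-v$ is finely continuous and bounded, and these sets decrease; note $b(K)\subseteq\{w<v+1/n\}$ since $w=v$ on $b(K)$. As $V_n\supseteq K$, monotonicity of reductions gives $P_{V_n}w=R_w^{V_n}\ge R_w^K=P_Kw$, which is the lower bound. For the upper bound fix $x\in X\setminus K$; either $w(x)=v(x)$, when $P_{V_n}w(x)=w(x)=v(x)$ directly, or $w(x)>v(x)$ and then $x\notin V_n$ for $n$ large, so $P_{V_n}w(x)=\int w\,d\ve_x^{V_n}$ with $\ve_x^{V_n}$ carried by the fine closure $V_n\cup b(V_n)$. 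On the fine closure of $\{w<v+1/n\}$, which lies in $\{w\le v+1/n\}$, we have $w\le v+1/n$; the remaining points of the fine closure lie in $K\cup b(K)$, where $w=v$ on $b(K)$ and the finely irregular set $K\setminus b(K)$ is semipolar and is not charged by the entry law (the process enters $K$ at its regular points). Thus $w\le v+1/n$ holds $\ve_x^{V_n}$-a.e., and since $v\in\es$ yields $P_{V_n}v=R_v^{V_n}\le v$ and $\ve_x^{V_n}(X)=P_{V_n}1(x)\le1$,
\begin{equation*}
P_{V_n}w(x)=\int w\,d\ve_x^{V_n}\le P_{V_n}v(x)+1/n\le v(x)+1/n\longrightarrow v(x)=P_Kw(x).
\end{equation*}

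The delicate point, and the step I expect to be the main obstacle, is that $V_n=\{w<v+1/n\}\cup K$ need not be finely open: at a finely irregular point $y\in K\setminus b(K)$ with $w(y)\ge v(y)+1/n$, the set $\{w\ge v+1/n\}\setminus K$ may fail to be thin at $y$, so $y$ need not be finely interior to $V_n$. Since these exceptional points form a semipolar subset of $K$, the remedy I would pursue is to envelop $K\setminus b(K)$ in a decreasing sequence of finely open sets $O_m$ whose first–entry contribution can be made negligible by semipolarity, and to replace $V_n$ by the genuinely finely open sets $\{w<v+1/n\}\cup O_{m(n)}$ for a suitable choice $m(n)$. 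The upper estimate above is unaffected, because $\ve_x^{V_n}$ ignores the irregular set; the lower bound $V_n\supseteq K$ survives because $O_m\supseteq K\setminus b(K)$ while $b(K)\subseteq\{w<v+1/n\}$. Carrying out this enveloping \emph{inside} $\{w\le v+1/n\}$, so as not to spoil the upper bound, is precisely where the interplay between semipolarity, the base operation $b(\cdot)$ and the fine topology from \cite{BH} has to be used with care; note that we never need that $R_w^K$ itself is finely upper semicontinuous (Lemma \ref{fusc}), since the argument runs entirely through the finely continuous majorant $v=\hat R_w^K$.
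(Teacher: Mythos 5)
Your construction breaks down exactly at the irregular points of $K$, and not only in the way you acknowledge. Besides $V_n$ failing to be finely open, your upper-bound step already presupposes that $\ve_x^{V_n}$ does not charge $(K\setminus b(K))\cap\{w> v+1/n\}$, and that is false: hitting measures annihilate \emph{polar} sets, not semipolar ones, and the parenthetical ``the process enters $K$ at its regular points'' is wrong --- all one can say is $X_{T_A}\in A\cup b(A)$, and landing on $A\setminus b(A)$ may even have probability one (for space-time Brownian motion the totally thin hyperplane of Example \ref{hyperplanes} is hit a.s.\ from below). Here is a counterexample inside the paper's framework. Let $\mathfrak X$ be uniform translation to the right on $X=\real$ (its resolvent is strong Feller, so (C) holds, and (S), (T) are easily checked); its excessive functions are precisely the positive non-increasing right-continuous functions. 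Take $K=\{0\}$, so $b(K)=\emptyset$, and $w=1_{(-\infty,0)}+\tfrac12 1_{[0,1)}$; then $v:=\hat P_Kw=\tfrac12 1_{(-\infty,0)}$, and at the irregular point $0$ we have $w(0)=\tfrac12>v(0)+1/n$. Every finely open Borel $V\supseteq K$ contains some $[0,\epsilon)$, and for $x<0$ either $V$ meets $[x,0)$, in which case $P_Vw(x)=1>\tfrac12=P_Kw(x)$, or it does not, in which case $\ve_x^{V}=\delta_0$. So \emph{any} sequence $(V_n)$ that achieves $\inf_n P_{V_n}w(x)=P_Kw(x)$ must eventually have $\ve_x^{V_n}=\delta_0$: the entry measure puts all its mass on $K\setminus b(K)$, at a point where $w>v+1/n$. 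Hence your inequality ``$w\le v+1/n$ holds $\ve_x^{V_n}$-a.e.'' fails for every admissible choice, and the proposed remedy cannot exist: no finely open $O\supseteq K\setminus b(K)$ lies in $\{w\le v+1/n\}$ (it contains the point $0$, which does not), and its first-entry contribution is not negligible, since $\ve_x^{V}(\{0\})=1$. The lemma is nevertheless true in this example because on the event $[X_{D_{V_n}}\in K]$ one has $D_{V_n}=D_K$, so such paths contribute to $P_{V_n}w$ exactly what they contribute to $P_Kw$; this strong-Markov separation of ``entry at $K$'' from ``entry at $V_n\setminus K$'' is the idea missing from your argument, and it is not a small repair --- it is the substance of \cite[VI.1.2]{BH} and of Mertens' ``delicate'' approximation recalled after Proposition \ref{mu-proposition}.

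For comparison, the paper does not construct the $V_n$ by hand at all. It quotes \cite[VI.1.2]{BH} for the statement that $P_Kw$ is the \emph{pointwise} infimum of $P_Vw$ over all finely open Borel $V\supseteq K$ (this is where the difficulty above is dealt with), then extracts a countable subfamily via Choquet's topological lemma so that the lower semicontinuous regularizations agree, and finally squeezes $V_n\subset U_n$ for open $U_n\downarrow K$, so that each $P_{V_n}w$ is harmonic off $\ov U_n$; the countable infimum is then harmonic off $K$, hence equals its regularization $\hat P_Kw=P_Kw$ there, while on $K$ all the functions equal $w$. If you want a self-contained proof, it is that citation --- not a fine-topological enveloping of $K\setminus b(K)$ --- that has to be reproduced.
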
 

\begin{proof} 
 By \cite[VI.1.2 (and its proof)]{BH}, $P_Kw$ is the infimum of all functions 
$P_Vw$, where~$V$ is a finely open Borel set containing $K$.    By a topological  lemma of Choquet
(see \cite[I.1.8]{BH}), there is
a sequence $(V_n)$ of such sets satisfying
\begin{equation*}
                                         \hat P_Kw= \widehat{\inf_{n\in\nat} P_{V_n}w}.
\end{equation*} 
Fixing a decreasing sequence $(U_n)$  of open sets in $X$ with $\bigcap_{n\in\nat} \ov U_n=K$,
 we may assume without loss of generality that $V_{n+1}\subset V_n\subset U_n$ for every  $n\in\nat$. 
Then, by \cite[VI.2.6]{BH}, every function $P_{V_n}w$, $n\in\nat$, is harmonic on~$X\setminus \ov U_n$. 
Hence  $\inf_{n\in\nat} P_{V_n}w$ is harmonic on $X\setminus K$, by \cite[III.3.1]{BH}, and we obtain that
\begin{equation*}
                             P_Kw=\hat P_Kw = \inf P_{V_n}w \on X\setminus K.
\end{equation*} 
The proof is completed observing that  $P_Kw=w=\inf P_{V_n}w$ on $K$.
\end{proof} 

\begin{proposition}\label{mu-proposition}
Let $A$ be the union of compacts $K_n$ in $X$, $n\in\nat$, and a~polar set $E\in \B$. 
Further, let $w\in\es$ and $\mu\in \M(X)$ be such that  $w $ is bounded and $\mu(X)<\infty$. 
Then 
\begin{equation*} 
\int P_Aw\, d\mu= \inf\{\int P_Vw\,d\mu\colon A\subset V,\  V \mbox{ finely open Borel\,}\} .
\end{equation*} 
\end{proposition}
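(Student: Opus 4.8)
The plan is to prove the nontrivial inequality ``$\ge$''. The reverse is immediate: for every finely open Borel $V\supset A$ one has $P_V w=R_w^V\ge R_w^A=P_A w$ pointwise, hence $\int P_A w\,d\mu\le\int P_V w\,d\mu$ and therefore $\int P_A w\,d\mu\le\inf_V\int P_V w\,d\mu$. So it remains to produce, for every $\ve>0$, a finely open Borel set $V\supset A$ with $\int P_V w\,d\mu\le\int P_A w\,d\mu+\ve$.

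First I would remove the polar set. Put $B:=\bigcup_n K_n$. Since $E$ is polar and $w$ is bounded, $\hat R_w^E=0$, and subadditivity of reductions gives $\hat P_A w=\hat P_B w$; as $P_A w=w$ on $A$ while $P_A w=\hat P_A w$ off $A$, the functions $P_A w$ and $P_B w$ differ only on $E\sms B$. Moreover every admissible $V$ contains $E$, so $P_V w=w=P_A w$ on $E$. Consequently the contribution of $E$ is the same, $\int_E w\,d\mu$, on both sides of the asserted identity, and splitting $\mu=1_E\mu+1_{X\sms E}\mu$ reduces the statement to the measure $\mu':=1_{X\sms E}\mu$. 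Thus I may assume $\mu(E)=0$ and replace $A$ by $B$ in the integrals, provided the finally constructed set is enlarged by a finely open neighbourhood of $E$ whose $\mu$-content is negligible.

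The building block is Lemma~\ref{mu-K-proposition}: for a compact $K$ it produces a \emph{decreasing} sequence $(V_j)$ of finely open Borel sets containing $K$ with $P_K w=\inf_j P_{V_j}w$ \emph{pointwise}. Since $w$ is bounded and $\mu$ finite, dominated convergence turns this into $\int P_K w\,d\mu=\inf\{\int P_V w\,d\mu\colon K\subset V\ \text{finely open Borel}\}$, and the same holds for each finite union $L_m:=K_1\cup\dots\cup K_m$, which is again compact. To pass to $B$ I would use continuity of reductions along the increasing exhaustion $L_m\uparrow B$, namely $P_{L_m}w\uparrow P_B w$ (a standard property of reductions, see \cite{BH}), whence $\int P_{L_m}w\,d\mu\uparrow\int P_B w\,d\mu$. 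Now choose $m$ so large that both $\int(P_B w-P_{L_m}w)\,d\mu$ and $\int_{B\sms L_m}w\,d\mu$ are $<\ve/4$; by the compact case there is a finely open $W\supset L_m$ with $\int P_W w\,d\mu<\int P_B w\,d\mu+\ve/4$. It then remains to cover the ``tail'' $A':=(\bigcup_{n>m}K_n)\cup E$, which has small $\mu$-content, by a finely open $O\supset A'$ with $\int P_O w\,d\mu<\ve/2$; taking $V:=W\cup O\supset A$ and using subadditivity $P_V w\le P_W w+P_O w$ yields $\int P_V w\,d\mu<\int P_A w\,d\mu+\ve$.

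The main obstacle is exactly this last step, and more generally the passage from finitely to countably many sets, under a \emph{general} finite measure $\mu$ that may charge semipolar sets. The soft argument---Choquet's topological lemma (\cite[I.1.8]{BH}) applied to the downward directed family $\{P_V w\colon A\subset V\ \text{finely open}\}$, whose members are excessive because $V\subset b(V)$---only delivers $\widehat{\inf_V P_V w}=\hat P_A w$, hence $\inf_V P_V w=P_A w$ off a \emph{semipolar} exceptional set contained in $X\sms A$; this is not enough, since $\mu$ may put positive mass on that set. What rescues the proof is the \emph{pointwise} (not merely regularized) equality of Lemma~\ref{mu-K-proposition}, obtained there from the harmonicity of the approximating reductions off shrinking neighbourhoods of the compact. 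The delicate residual point is therefore to control, in $L^1(\mu)$ rather than up to regularization, the ``spread'' $\hat P_O w$ of a finely open neighbourhood $O$ of the tail $A'$: I expect this to require iterating the compact approximation on the finitely many middle terms of the tail, estimating the far tail and the polar part through the genuine vanishing available for polar sets (via an excessive $p=\infty$ on $E$ and the bound $\hat P_{\{p>t\}}w\to 0$ off $\{p=\infty\}$), and invoking outer regularity of $\mu$ together with the capacity estimates of \cite{BH}; the fact $\mu(E)=0$ secured above is what makes the polar contribution vanish in the limit.
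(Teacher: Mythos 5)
Your opening moves (the trivial inequality, the single-compact case via Lemma~\ref{mu-K-proposition} plus dominated convergence, and the isolation of the polar part) are sound, but the core of the proof --- the passage from finitely many compacts to the countable union --- is precisely where your argument breaks, and the mechanism you propose cannot be repaired. You split $A$ into $L_m=K_1\cup\dots\cup K_m$ and the tail $A':=(\bigcup_{n>m}K_n)\cup E$, and you need a finely open Borel $O\supset A'$ with $\int P_O w\,d\mu<\ve/2$. Such an $O$ need not exist for \emph{any} $m$: since $P_O w\ge P_{A'}w$, its existence would force $\int P_{A'}w\,d\mu<\ve/2$, and smallness of $\mu(A')$ gives no control whatsoever on $\int P_{A'}w\,d\mu$. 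Concretely, for space-time Brownian motion let $K_n$ be the closed unit cube in the hyperplane $H_{1+1/n}$, let $w=1$, and let $\mu$ be Lebesgue measure on a unit box in $\real^d\times[-1,0]$; then $\mu(A)=0$, yet every tail $\bigcup_{n>m}K_n$ is hit from each point of the box with probability at least some fixed $c>0$, so $\int P_O w\,d\mu\ge c\,\mu(X)$ for every $O$ containing the tail, whatever $m$ is. (If, as one may assume, the $K_n$ increase, the situation is starker still: every tail equals $\bigcup_n K_n$.) Your closing paragraph concedes that this step is open; it is not a ``delicate residual point'' but the actual content of the proposition.

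The idea you are missing is \emph{strong} subadditivity of reduced functions, which is how the paper (following the proof of \cite[VI.1.4]{BH}) handles all compacts simultaneously instead of cutting off a tail: with $(K_n)$ increasing, choose by Lemma~\ref{mu-K-proposition} finely open Borel $V_n\supset K_n$ with $\int P_{V_n}w\,d\mu<\int P_{K_n}w\,d\mu+2^{-n}\ve$; strong subadditivity gives, for $W_n:=V_1\cup\dots\cup V_n$, the cumulative bound $\int P_{W_n}w\,d\mu\le\int P_A w\,d\mu+(1-2^{-n})\ve$, and since $P_{W_n}w\uparrow P_W w$ for $W:=\bigcup_n V_n$ by \cite[VI.1.7]{BH}, monotone convergence yields $\int P_W w\,d\mu\le\int P_A w\,d\mu+\ve$. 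Thus the errors are accumulated term by term over \emph{all} compacts, and no cheap cover of a tail is ever needed. The polar set is then dealt with separately: after the harmless normalization $\mu(A)=0$ one has $P_E1=0$ $\mu$-a.e., so by \cite[VI.1.9]{BH} there is an ordinary open $U\supset E$ with $\int P_U1\,d\mu<\ve$ (your sketch with an excessive function equal to $\infty$ on $E$ would serve the same purpose), and $V:=W\cup U$ finishes the proof by plain subadditivity $P_Vw\le P_Ww+P_Uw$ --- legitimate at this point because both error terms are now small.
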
 

\begin{proof} Of course, we may suppose that the sequence $(K_n)$ is increasing. 
Moreover, we may assume that $\mu(A)=0$, since  $P_Aw=w=P_Vw$,  whenever $A\subset V\subset X$. 

Let us fix $\ve>0$. 
Since $T_E=\infty$ a.s., we have $P_E1=0$ on $X\setminus E$. Hence, by \cite[VI.1.9]{BH}, 
there exists an open neighborhood $U$ of $E$ such that $\int P_U1\,d\mu<\ve$. 
Moreover, by~Lemma~\ref{mu-K-proposition}, there exist finely open $V_n\in\B$, $n\in\nat$,
such that 
\begin{equation*}
K_n\subset V_n \und \int P_{V_n}w\,d\mu<\int P_{K_n}w\,d\mu+2^{-n} \ve.
\end{equation*} 
Defining $W_n:=V_1\cup \dots\cup V_n$ and proceeding as in the proof of \cite[VI.1.4]{BH}
 we get
\begin{equation*}
                                 \int P_{W_n}w\,d\mu \le \int P_Kw\,d\mu+(1-2^{-n})\ve \qquad\mbox{ for every }n\in\nat.
\end{equation*} 
Let $W:=\bigcup_{n\ge 1}W_n$ and $V:=W\cup U$. Then $P_Vw\le P_Ww+P_Uw$ and $P_{W_n}w\uparrow P_Ww$,
by \cite[VI.1.7]{BH}. Thus we finally conclude that $\int P_Vw\,d\mu\le \int P_Kw\,d\mu+2\ve$.
\end{proof}

\begin{remark}{\rm  
In \cite[p.\,138]{mertens} such a result is shown for   sets $A$ which are \emph{strictly thin}, that is,
satisfy $\hat P_A1< \eta$ on~$A $ for some $\eta\in (0,1)$, giving first a (delicate)  proof for 
the following stunning approximation of the hitting time $T_A$: For every probability measure $\nu$
on $X$ not charging $A$, there exists  a decreasing sequence $(V_n)$ of finely open   sets containing $A$ 
such that $\limn P^\nu[T_{V_n}<T_A]=0$.
}
\end{remark}

\begin{corollary}\label{NRmu}
Let $u\in \N\cap \B(X)$,   $v\in\R(\{\hat u<u\})$ and let  $\mu\in\M(X)$ be a~finite measure.  Then
 \begin{equation}\label{NRmu-formula}
\int v\,d\mu:=\inf\{\int w\,d\mu\colon w\in\es,\,w\ge v\}.
\end{equation} 
\end{corollary}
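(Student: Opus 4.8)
The plan is to show that $v$ belongs to the convex cone $\F$ of Lemma \ref{sums}, since by definition membership $v\in\F$ is exactly the asserted identity (\ref{NRmu-formula}). First I would dispose of the trivial case: if $\int v\,d\mu=\infty$, then (by (\ref{NR})) $v=R_v$ has excessive majorants, and any such majorant $w\ge v$ satisfies $\int w\,d\mu\ge\int v\,d\mu=\infty$, so (\ref{NRmu-formula}) holds. Hence I may assume $\int v\,d\mu<\infty$. By the definition of $\R(\{\hat u<u\})$ I write
\begin{equation*}
v=w_0+\sumk P_{B_k}w_k,\qquad w_0\in\es,\ B_k\in\B,\ B_k\subset\{\hat u<u\},\ w_k\in\es\text{ bounded}.
\end{equation*}
Since every $w\in\es$ is its own excessive majorant we have $\es\subset\F$, and $\F$ is closed under countable sums of finite total $\mu$-integral by Lemma \ref{sums}; as $\int v\,d\mu<\infty$, it therefore suffices to prove that each summand $P_{B_k}w_k$ lies in $\F$.

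Fixing such a term and writing $B:=B_k$, $w:=w_k$, I would exploit that $u\in\N\cap\B(X)$ forces $\{\hat u<u\}$ to be semipolar, whence its Borel subset $B$ is semipolar as well. By the structure of semipolar Borel sets (\cite[Proposition 5.2]{HN-mertens} in connection with Remark \ref{erratum}), $B$ is the union of countably many compacts and a polar Borel set, so Proposition \ref{mu-proposition} applies to $A:=B$ and the bounded $w\in\es$ with the finite measure $\mu$, yielding
\begin{equation*}
\int P_Bw\,d\mu=\inf\Bigl\{\int P_Vw\,d\mu\colon B\subset V,\ V\text{ finely open Borel}\Bigr\}.
\end{equation*}

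The remaining task is to convert this topological approximation into an infimum over excessive majorants. The decisive observation is that for a finely open Borel set $V$ one has $V\subset b(V)$ (the base contains the fine interior), so that $P_Vw=\hat P_Vw=\hat R_w^V\in\es$; moreover $B\subset V$ gives $P_Bw=R_w^B\le R_w^V=P_Vw$ by monotonicity of reduced functions. Thus each $P_Vw$ occurring above is an excessive majorant of $P_Bw$, and consequently
\begin{equation*}
\inf\{\int w'\,d\mu\colon w'\in\es,\ w'\ge P_Bw\}\le\int P_Bw\,d\mu.
\end{equation*}
The reverse inequality is immediate, so $P_Bw\in\F$, and assembling the summands finishes the proof. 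I expect the main point to be precisely this last step — recognizing that the finely open reduites $P_Vw$ furnishing the approximation in Proposition \ref{mu-proposition} are themselves excessive majorants of $P_Bw$, which is what upgrades the approximation to the desired infimum over $\es$. The reduction to a single building block $P_Bw$, the semipolarity of $B$, and the closure properties of $\F$ are routine by comparison.
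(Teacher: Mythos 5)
Your proof is correct and follows essentially the same route as the paper: decompose $v$ per the definition of $\R(\{\hat u<u\})$, use the semipolarity of $\{\hat u<u\}$ to write each $B_k$ as countably many compacts plus a polar Borel set, apply Proposition \ref{mu-proposition}, observe that the approximating functions $P_Vw$ (for $V$ finely open Borel) lie in $\es$ and majorize $P_{B_k}w_k$, and conclude with Lemma \ref{sums}. The paper's two-line proof compresses exactly these steps, citing (\ref{red-es}) for $P_Vw\in\es$ where you argue via $V\subset b(V)$ and $P_Vw=\hat P_Vw$ --- the same fact.
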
 

\begin{proof} By (\ref{red-es}), $P_Vw\in \es$ for all  finely open sets $V\in \B$ and   $w\in \es$.        
Thus (\ref{NRmu-formula}) follows immediately from  Proposition \ref{mu-proposition} and Lemma \ref{sums}. 
\end{proof}

\begin{definition}
For every Borel measurable  $u\in\N$,   let $\M_u(X)$ denote the set of all $\mu\in \M(X)$ 
such that $u$ is $\mu$-integrable and  $\mu(A) +\int_{X\setminus A} w\,d\mu<\infty$ 
for some Borel set $A$ in $X$  and majorant $w\in \es$ of $u$. 
\end{definition}

Let  us say that a Borel measurable  $u\in \N$ has the \emph{finiteness property} (FP$'$) if, for
every $\mu\in\M_u$, there exists a~function $w\in \es$ with $w=\infty$ on the
set $\{u=\infty\}$ and~$\int w\,d\mu<\infty$.
Trivially, every $u\in\N$ with $u<\infty$ has this property (take $w=0$).

\begin{theorem}\label{Borel-mu}
Let $u\in \N $ be Borel measurable and  $\mu\in\M_u(X)$.   
If  $u<\infty$ or, more generally, if $u$ has the property~{\rm (FP$'$)},  then 
\begin{equation*}
       \int u\,d\mu=\inf \{\int w\,d\mu\colon w\in\es,\,w\ge u\} . 
\end{equation*} 
\end{theorem}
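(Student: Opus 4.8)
The plan is to establish the one nontrivial inequality
$\inf\{\int w\,d\mu\colon w\in\es,\ w\ge u\}\le \int u\,d\mu$, the reverse being immediate since every majorant $w\ge u$ satisfies $\int w\,d\mu\ge\int u\,d\mu$. We may assume $\int u\,d\mu<\infty$, because $\mu\in\M_u(X)$ forces $u$ to be $\mu$-integrable (otherwise the identity is trivial, taking $w=\infty$). I would first settle the case of a \emph{finite} measure $\mu$ and then bootstrap to an arbitrary $\mu\in\M_u(X)$ using the splitting device recorded in Remark \ref{KK}.

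For finite $\mu$ the argument mirrors the proof of Theorem \ref{general-1}. Fix $\eta\in(0,1)$ and $\ve>0$. Since the constants lie in $\es$, the function $u+\ve$ belongs to $\N\cap\B(X)$ and has $\inf(u+\ve)(X)\ge\ve>0$, so Proposition \ref{key-reduction} applied to $u+\ve$ produces $v\in\N\cap\B(X)$ with $\eta(u+\ve)\le v\le u+\ve$, with $\{\hat v<v\}=F$, and with $P^F1<\infty$ on $\{\hat v<\infty\}\supset\{v<\infty\}$; hence $v$ satisfies Assumption \ref{ass}. As $\{v=\infty\}=\{u=\infty\}$, property (FP$'$) (or $u<\infty$) supplies $w_1\in\es$ with $w_1=\infty$ on $\{v=\infty\}$ and, after scaling, $\int w_1\,d\mu<\ve$. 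Corollary \ref{corollary-special} applied to $v$ then gives $v+w_1\in\R(\{\hat v<v\})$, and Corollary \ref{NRmu} (which is available for finite $\mu$) yields $\int(v+w_1)\,d\mu=\inf\{\int w\,d\mu\colon w\in\es,\ w\ge v+w_1\}$. Since $v+w_1\ge\eta u$, every excessive $w\ge v+w_1$ has $w/\eta\in\es$ and $w/\eta\ge u$, so the infimum of $\int w'\,d\mu$ over excessive majorants $w'$ of $u$ is at most $\eta^{-1}\int(v+w_1)\,d\mu\le\eta^{-1}\bigl(\int u\,d\mu+\ve(\mu(X)+1)\bigr)$. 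Letting $\ve\to0$ and then $\eta\to1$ gives the inequality for finite $\mu$.

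To pass to a general $\mu\in\M_u(X)$, choose a Borel set $A$ and a majorant $w_0\in\es$ of $u$ with $\mu(A)+\int_{X\setminus A}w_0\,d\mu<\infty$. Given $\ve>0$, Remark \ref{KK} furnishes $A'\supset A$ with $\mu(A')<\infty$ and $\int_{X\setminus A'}w_0\,d\mu<\ve$. Write $\mu=\mu_1+\mu_2$ with $\mu_1:=1_{A'}\mu$ (a finite measure, and $\mu_1\in\M_u(X)$ since $\int u\,d\mu_1\le\int u\,d\mu<\infty$) and $\mu_2:=1_{X\setminus A'}\mu$. The finite case applied to $\mu_1$ yields an excessive $w^\ast\ge u$ with $\int w^\ast\,d\mu_1<\int u\,d\mu_1+\ve$. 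The decisive step is to trim $w^\ast$ against $w_0$: put $w:=w^\ast\wedge w_0$, which is again excessive because $\es$ is $\wedge$-stable and still satisfies $w\ge u$. Then $\int w\,d\mu\le\int w^\ast\,d\mu_1+\int w_0\,d\mu_2<\int u\,d\mu_1+\ve+\ve\le\int u\,d\mu+2\ve$, and letting $\ve\to0$ finishes the proof.

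The \textbf{main obstacle} is exactly this final gluing step. The excessive majorant $w^\ast$ obtained for the finite part $\mu_1$ carries no control whatsoever over the tail $\mu_2$, on which its integral may well be infinite; one is therefore forced to combine $w^\ast$ with the a priori majorant $w_0$ supplied by membership in $\M_u(X)$. This is harmless only because $\es$ is stable under finite infima, which lets one manufacture a \emph{single} excessive function that is simultaneously nearly optimal against $\mu_1$ and negligible against $\mu_2$. By contrast, the finite case is comparatively routine once Theorem \ref{general-1} and Corollary \ref{NRmu} are in hand: its only real content is the $\eta$-reduction of Proposition \ref{key-reduction}, which brings $u$ into the range of the special-case machinery and costs just the factor $\eta^{-1}$ that is recovered in the limit.
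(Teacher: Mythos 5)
Your proof is correct and follows essentially the same route as the paper: the same ingredients (Proposition \ref{key-reduction} applied to $u$ shifted by a small constant, Corollary \ref{corollary-special} with the function $w_1$ supplied by (FP$'$), Corollary \ref{NRmu} for the finite part of $\mu$, the splitting from Remark \ref{KK}, and trimming the resulting majorant by $w\wedge w_0$ using $\wedge$-stability of $\es$) appear in the paper's proof in the same roles. The only difference is organizational: you isolate the finite-measure case as a separate step and then glue, while the paper runs the identical argument in a single pass with $\nu:=1_A\mu$.
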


\begin{proof} Let $\eta\in (0,1)$ and $\ve>0$.
Assuming that $u$ has the property (FP$'$),  we choose $w_1\in \es$ 
such that $w_1=\infty$ on $\{u=\infty\}$ and $\int w_1\,d\mu<\ve$.
By Remark~\ref{KK},  there exists $A\in \B$ and $w_0\in\es$ such that
$w_0\ge u$ and $\mu(A)<\infty$, $\int_{X\setminus A} w_0\,d\mu<\ve$. 
We fix  $\delta>0$ such that $\delta \mu(A) <\ve$, and define  
\begin{equation*}
                 \nu:=1_A \mu  \und u_1:=u+\delta. 
\end{equation*} 

By Proposition \ref{key-reduction}  and Corollary \ref{corollary-special},   
there exists a~Borel measurable   $v\in \N$ such that 
$\eta u_1\le v\le u_1$ and $v+w_1\in\R(\{\hat v<v\})$. Then, by Corollary \ref{NRmu}, 
\begin{eqnarray*} 
& &   \int u\,d\mu +2\ve >\int (u_1+w_1)\,d\nu  \ge        \int (v+w_1)\,d\nu\\
&=&\inf\{\int w\,d\nu\colon  w\in\es,\ w\ge v+w_1\}
\ge \eta \inf\{\int w\,d\nu\colon  w\in\es,\ w\ge u\}. 
\end{eqnarray*} 
So there exists $w\in\es$ such that $w\ge u$ and $\eta \int w\,d\nu<\int u\,d\mu+2\ve$.
We may assume without loss of generality that $w\le w_0$. Then
\begin{equation*}
           \eta   \int w\,d\mu< \eta \int w\,d\nu+\ve<\int u\,d\mu+3\ve.
\end{equation*} 
Letting $\ve$ tend to $0$ and $\eta$ tend to $1$ the proof is completed.
\end{proof}

\begin{corollary}\label{u-infty}
 Every  Borel measurable $u\in\N$ has the property {\rm (FP$'$)}.   
\end{corollary}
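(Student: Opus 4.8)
The plan is to mirror the proof of Corollary~\ref{u-infty-infty} almost verbatim, simply replacing the pointwise reduction supplied there by Theorem~\ref{general-1} with the integral reduction supplied by Theorem~\ref{Borel-mu}. So I would fix a Borel measurable $u\in\N$, a measure $\mu\in\M_u(X)$, and set $E:=\{u=\infty\}$. The first — and conceptually decisive — observation is that membership in $\M_u(X)$ forces $\mu(E)=0$: since $u$ is $\mu$-integrable and $u\equiv\infty$ on $E$, any positive $\mu$-mass on $E$ would make $\int u\,d\mu=\infty$. This is exactly what makes (FP$'$) attainable, because a function $w$ with $w=\infty$ on a $\mu$-null set $E$ can still be $\mu$-integrable.

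Next I would reproduce the device $1_E=1\wedge\inf_{n\in\nat}(u/n)$, which lies in $\N$ (it is an infimum of the members $u/n$ of the convex cone $\N$, capped by the constant $1\in\es\subset\N$) and is bounded, hence finite. The one point requiring care — the main, though minor, obstacle — is to verify that $\mu\in\M_{1_E}(X)$, so that Theorem~\ref{Borel-mu} is applicable to $1_E$. Here I would reuse the data already furnished by $\mu\in\M_u(X)$: there exist $A\in\B$ and a majorant $w_0\in\es$ of $u$ with $\mu(A)+\int_{X\setminus A}w_0\,d\mu<\infty$. Since $u\ge 1_E$ (indeed $u=\infty$ on $E$ and $u\ge 0$ off $E$), the same $w_0$ majorizes $1_E$, and $1_E$ is $\mu$-integrable because $\int 1_E\,d\mu=\mu(E)=0$; thus the identical pair $(A,w_0)$ witnesses $\mu\in\M_{1_E}(X)$.

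Finally I would apply Theorem~\ref{Borel-mu} to the bounded (hence finite) function $1_E$: since $\int 1_E\,d\mu=\mu(E)=0$, this yields $\inf\{\int w\,d\mu\colon w\in\es,\ w\ge 1_E\}=0$. Consequently, for each $n\in\nat$ I can select $w_n\in\es$ with $w_n\ge 1_E$ and $\int w_n\,d\mu<2^{-n}$, and then set $w:=\sum_{n\ge 1}w_n\in\es$ (recall $\es$ is a convex cone closed under increasing limits). Each $w_n\ge 1$ on $E$ forces $w=\infty$ on $E$, while $\int w\,d\mu=\sum_{n\ge 1}\int w_n\,d\mu<1<\infty$, which is precisely property (FP$'$) for $u$. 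I expect no serious difficulties beyond the bookkeeping above; the entire content is the passage from the pointwise normalization used for (FP) to its $\mu$-integrated analogue, together with the elementary but essential remark that $\mu$-integrability of $u$ already annihilates the mass on $\{u=\infty\}$.
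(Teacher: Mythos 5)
Your proof is correct and follows essentially the same route as the paper: observe $\mu(E)=0$, note $1_E=1\wedge\inf_{n\in\nat}(u/n)\in\N$, apply Theorem~\ref{Borel-mu} to the finite function $1_E$ to get $w_n\in\es$ with $w_n\ge 1_E$ and $\int w_n\,d\mu<2^{-n}$, and sum. Your explicit verification that $\mu\in\M_{1_E}(X)$ (reusing the pair $(A,w_0)$ witnessing $\mu\in\M_u(X)$) is a detail the paper leaves implicit, but it is exactly the right check and adds nothing beyond the paper's argument.
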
 

\begin{proof} 
Let $u\in \N$ be Borel measurable, $\mu\in\M_u(X)$ and  $E:=\{u=\infty\}$. 
In particular, $\int u\,d\mu<\infty$,   and hence $\mu(E)=0$. 
Obviously, $1_E=1\wedge \inf_{n\in\nat} (u/n)\in \N$.
Hence, by Theorem~\ref{Borel-mu}, there exist functions 
$w_n\in\es$, $n\in\nat$,  such that
 \begin{equation*} 
w_n\ge 1_E  \und \int w_n\, d\mu<\int 1_E\,d\mu+ 2^{-n}=2^{-n}. 
\end{equation*} 
Then~$w:=\sumn w_n\in\es$, $w=\infty$ on $E$  and $\int w \,d\mu<1$. 
\end{proof} 

Combining Corollary \ref{u-infty} with Theorem \ref{Borel-mu} we obtain
 the implication (1)\,$\Rightarrow$\,(3) in Theorem \ref{main}.

\section{Weakening of the measurability assumption}\label{weaker}

In this section we shall consider nearly hyperharmonic functions which may not be Borel measurable.
Let $\bu$ be the $\sigma$-algebra of all ($\B$-)universally measurable sets and, 
as in \cite{HN-mertens}, let~$\tilde \B,\bup$ respectively denote the $\sigma$-algebra of all
sets $A$ in $X$ for which there exists a~set $B$ in $\B,\bu$ respectively such that the
symmetric difference $A \triangle B$ is polar, that is, $\hat P_{A \triangle B} 1=0$.

Let us  observe that, for functions $u\ge 0$ which are $\bup$-measurable, the upper 
integral in (\ref{def-nearly}) may be replaced by the integral, since the measures 
$P_\vc(x,\cdot)=\vx^\vc$, $x\in V\in \U_c$, do not charge polar sets; see \cite[VI.5.6]{BH}.

Let $f$ be a positive function on $X$ which is $\tilde \B$-measurable. 
Since every polar set is contained in  a~Borel polar set and every countable union of polar sets is polar,
there exist $f_1\in  \B^+(X)$  and $f_2\in \B^+(X)$  such that  the set $\{f_2>0\}$ 
is polar and $f_1\le f\le f_1+f_2$.

The following result extends the implication 
(1)\,$\Rightarrow$\,(3) of  Theorem~\ref{main} to functions which are nearly Borel 
measurable, that is, $\tilde \B\cap \B^\ast$-measurable.

\begin{theorem}\label{nearly-Borel}
Let  $u$ be a  nearly Borel measurable function in $\N$.   Then
\begin{equation*}
     \int u\,d\mu=\inf \{\int w\,d\mu\colon w\in\es,\ w\ge u\}
\end{equation*} 
for every $\mu\in \M(X)$   such that $\mu(A)+ \int_{X\setminus A} w\,d\mu<\infty$ for some $A\in \B$
and some majorant $w\in\es$ of $u$. 
\end{theorem}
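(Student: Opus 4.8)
The goal is to reduce Theorem~\ref{nearly-Borel} to the already-established Borel case (Theorem~\ref{Borel-mu} together with Corollary~\ref{u-infty}, which give the implication $(1)\Rightarrow(3)$ for Borel measurable functions in $\N$). The plan is to exploit the decomposition offered just before the statement: since $u$ is nearly Borel measurable, in particular $\tilde \B$-measurable, there exist $u_1\in\B^+(X)$ and $u_2\in\B^+(X)$ with the set $\{u_2>0\}$ polar and $u_1\le u\le u_1+u_2$. The first step is to replace $u$ by a genuinely Borel function that agrees with $u$ outside a polar set and still lies in $\N$.

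First I would pass from $u_1$ to the Borel function $\tilde u:=u_1\vee \widehat{u}$, or more directly observe that $\hat u=\hat u^f\in\es$ (this holds for all $u\in\N$ by the remarks after~(\ref{red-es})), and set $u_1':=\hat u\vee u_1$. The key point is that $u$ differs from a Borel measurable nearly hyperharmonic function only on the polar set $\{u_2>0\}\supseteq\{u\ne u_1\}$. Concretely, since $u\ge\hat u$ everywhere and $u=\hat u$ outside a semipolar set, and since modifying a function on a polar set does not affect membership in $\N$ (a function vanishing outside a polar set is nearly hyperharmonic, and $\N$ is a convex cone closed under the relevant operations), the function $\tilde u:=u_1$ restricted appropriately can be arranged to be Borel measurable, to lie in $\N$, and to satisfy $u=\tilde u$ outside the polar set $P:=\{u_2>0\}$. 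I would verify $\tilde u\in\N$ using that $P_\vc(x,\cdot)$ does not charge polar sets, so that the defining inequality~(\ref{def-nearly}) for $\tilde u$ is literally the same as for $u$.

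The second, and essentially trivial once the reduction is in place, step is the integral identity itself. Because the measures $P_\vc(x,\cdot)$ do not charge polar sets (cited from \cite[VI.5.6]{BH} just before the theorem), and because $\mu$ satisfies the hypothesis, I would show that $\int u\,d\mu=\int \tilde u\,d\mu$ provided $\mu$ does not charge $P$; the measure hypothesis $\mu(A)+\int_{X\setminus A}w\,d\mu<\infty$ gives integrability of the Borel majorant $w$, and since $\{w=\infty\}$ is polar while $w\ge u$, one gets $\mu(\{u=\infty\}\cap(X\setminus A))=0$ and control over the polar discrepancy. The excessive majorants of $\tilde u$ and of $u$ coincide up to polar modifications (any $w\in\es$ with $w\ge\tilde u$ also satisfies $w\ge u$ $\mu$-a.e., and conversely), so the infimum on the right is unchanged. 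Applying Theorem~\ref{Borel-mu} and Corollary~\ref{u-infty} to $\tilde u$ then yields $\int\tilde u\,d\mu=\inf\{\int w\,d\mu\colon w\in\es,\ w\ge\tilde u\}$, and transporting both sides across the polar set completes the argument.

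The main obstacle I anticipate is the bookkeeping around the polar exceptional set when $\mu$ is allowed to charge it. The hypothesis on $\mu$ only controls $u$ through a Borel majorant $w$, so I must argue carefully that $\mu$ restricted to the polar set $P$ contributes nothing to $\int u\,d\mu$ — this uses that $\int u\,d\mu<\infty$ forces $\mu(\{u=\infty\})=0$, but I still need the finer fact that the $\mu$-measure of $P$ does not corrupt the infimum over excessive majorants. The cleanest route is to absorb the polar defect into an arbitrarily small excessive function: for any $\varepsilon>0$ one can find $w_\varepsilon\in\es$ with $w_\varepsilon=\infty$ on $P$ and $\int w_\varepsilon\,d\mu<\varepsilon$ (mirroring the construction in Corollary~\ref{u-infty}), which dominates $u_2$ and hence lets one compare $u$ and $\tilde u$ directly. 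Handling this uniformly in $\eta\to 1$ and $\varepsilon\to 0$, exactly as in the proof of Theorem~\ref{Borel-mu}, is where the real care is needed, but it introduces no new machinery beyond what the Borel case already supplies.
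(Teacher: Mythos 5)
Your first step (producing a Borel $\tilde u\in\N$ with $\hat u\le \tilde u\le u$ and $u=\tilde u$ outside the polar set $P:=\{u_2>0\}$, via \cite[Proposition 2.2]{HN-mertens}) is fine, but your second step rests on a claim that is false precisely in the case you yourself flag as the main obstacle. To convert excessive majorants of $\tilde u$ into majorants of $u$ you need, for every $\ve>0$, a function $w_\ve\in\es$ with $w_\ve=\infty$ on $P$ and $\int w_\ve\,d\mu<\ve$. If $\mu(P)>0$ this is impossible: $w_\ve=\infty$ on $P$ forces $\int w_\ve\,d\mu=\infty$. And the hypothesis on $\mu$ does not exclude $\mu(P)>0$; for instance $\mu=\ve_x$ with $x\in P$ satisfies it with $A=\{x\}$. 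Such measures are not a pathology here — Dirac measures at arbitrary points are exactly how statement (2) of Theorem \ref{main} is extracted from statement (3), so the case of measures charging the polar exceptional set is the essential one. Since your $\tilde u$ lies \emph{below} $u$, with a possible strict gap on $P$, majorants of $\tilde u$ need not majorize $u$, and without $w_\ve$ the Borel case applied to $\tilde u$ gives no upper bound on $\inf\{\int w\,d\mu\colon w\in\es,\ w\ge u\}$. Even when $\mu(P)=0$, your appeal to ``mirroring Corollary \ref{u-infty}'' is circular: applying Theorem \ref{Borel-mu} to $\infty\cdot 1_P$ (or $1_P$) requires $\mu$ to lie in the corresponding class $\M_{\cdot}(X)$, i.e.\ one already needs an excessive majorant of $\infty\cdot 1_P$ that is $\mu$-integrable off a set of finite measure — which is essentially what you are trying to construct.

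The missing idea is the half of the hypothesis your proof never uses: nearly Borel means $\tilde\B\cap\B^\ast$-measurable, and the paper's proof runs on the $\B^\ast$ (universal measurability) part. For the \emph{given} $\mu$, universal measurability yields Borel functions $v_1\le u\le v_1+v_2$ with $v_2=0$ $\mu$-a.e.; combining with your decomposition one may assume $\hat u\le u_1\le v_1$ and $v_2\le u_2$, so that $v_1\in\N$ by \cite[Proposition 2.2]{HN-mertens}, $\{v_2>0\}$ is polar whence $v_2\in\N$, and $v:=v_1+v_2\in\N\cap\B^+(X)$ satisfies $v\ge u$ \emph{everywhere} and $v=u$ $\mu$-a.e. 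Because $v$ sits above $u$, every excessive majorant of $v$ is automatically a majorant of $u$, and Theorem \ref{main} applied to $v$ gives
\begin{equation*}
\int u\,d\mu=\int v\,d\mu=\inf\{\int w\,d\mu\colon w\in\es,\ w\ge v\}\ge\inf\{\int w\,d\mu\colon w\in\es,\ w\ge u\}\ge\int u\,d\mu,
\end{equation*}
with no correction on any polar set: the exceptional set $\{v_2>0\}$ is $\mu$-null by construction, and the comparison function is a majorant rather than a minorant of $u$. Note finally that dropping the $\B^\ast$ part is not an option anyway, since without it $\int u\,d\mu$ need not even be defined.
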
 

\begin{proof} There exist $u_1,u_2\in \B^+(X)$ such that  $ u_1\le u\le u_1+u_2$ and  the set $\{u_2>0\}$ is polar.
Of course, we may assume that $\hat u\le u_1$. 
Let us fix $\mu\in \M(X)$ 
 such that $\mu(A)+\int_{X\setminus A} w\,d\mu<\infty$ for some $A\in \B$ 
and majorant $w\in\es$ of $u$. 
Choosing $v_1, v_2\in \B^+(X)$ such that
$v_1\le u\le v_1+v_2$ and $v_2=0$ $\mu$-a.e., we may 
assume that $u_1\le v_1$ and $v_2\le u_2$.
 Then $v_1\in \N$, by \cite[Proposition 2.2]{HN-mertens}.
Since $\{v_2>0\}$ is polar, we know that $v_2\in \N$, and hence $v:=v_1+v_2\in \N$. Thus, by Theorem \ref{main}, 
\begin{eqnarray*} 
\int u\,d\mu=\int v \,d\mu&=&\inf\{\int w\,d\mu\colon w\in \es, w\ge v\}\\
&\ge& \inf\{\int w\,d\mu\colon w\in \es, w\ge u\}\ge \int u\,d\mu.
\end{eqnarray*} 
\end{proof}

For the implication (1)\,$\Rightarrow$\,(2) we even have the following.

\begin{theorem}\label{tilde-Borel}
Let  $u$ be a $\tilde \B$-measurable function in $\N$.   Then
\begin{equation*}
     u=\inf \{w\in\es\colon w\ge u\}.
    \end{equation*} 
\end{theorem}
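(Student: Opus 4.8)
The plan is to reduce the $\tilde\B$-measurable case to the Borel case already settled in Theorem~\ref{main}, exactly mirroring the decomposition strategy used in the proof of Theorem~\ref{nearly-Borel}. First I would invoke the structural observation stated just before Theorem~\ref{nearly-Borel}: since $u$ is $\tilde\B$-measurable and positive, there exist $u_1,u_2\in\B^+(X)$ with $u_1\le u\le u_1+u_2$ and $\{u_2>0\}$ polar. Replacing $u_1$ by $u_1\vee\hat u$ if necessary, I may assume $\hat u\le u_1$, so that $u_1\in\N$ by \cite[Proposition~2.2]{HN-mertens}.

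Next I would show $u_2\in\N$: any positive function vanishing outside a polar set is nearly hyperharmonic, as noted in Section~\ref{main-result} (the cone $\N$ contains every $u\ge0$ vanishing outside a polar set). Hence the Borel function $v:=u_1+u_2\in\N$, and it is a Borel majorant with $u\le v$. By the implication (1)\,$\Rightarrow$\,(2) of Theorem~\ref{main} applied to $v$,
\begin{equation*}
   v=\inf\{w\in\es\colon w\ge v\}.
\end{equation*}
Since $u\le v$, every $w\in\es$ with $w\ge v$ also satisfies $w\ge u$, so
\begin{equation*}
   \inf\{w\in\es\colon w\ge u\}\le\inf\{w\in\es\colon w\ge v\}=v.
\end{equation*}

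The delicate point, and the step I expect to be the main obstacle, is the reverse inequality, namely passing from $v$ back to $u$ on the polar exceptional set. On $\{u_2=0\}$ one has $u=v$, so the identity $u=\inf\{w\in\es\colon w\ge u\}$ follows there immediately. On the polar set $\{u_2>0\}$ I must argue that $\inf\{w\in\es\colon w\ge u\}$ still equals $u$; here the key is that $R_u:=\inf\{w\in\es\colon w\ge u\}$ lies in $\N$ with $\widehat{R_u}=\hat{R_u}\in\es$ by (\ref{red-es}), and that excessive functions—being suprema of continuous minorants by (C)—are insensitive to their values on polar sets in a controlled way. Concretely, since $\hat u\le u_1\le u\le R_u$ and $R_u\in\N$ has $\widehat{R_u}\in\es$, I would exploit that $R_u$ and $u$ differ at most on the semipolar set $\{\widehat{R_u}<R_u\}$ together with the polar set $\{u_2>0\}$; the reduction $R_u=\inf\{w\in\es\colon w\ge u\}$ forces $R_u\le v$ from the displayed inequality, while nearly hyperharmonicity of $u$ gives $u\le R_u$ by definition of $R_u$ as the infimum of excessive majorants. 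Combining $u\le R_u\le v$ with $u=v$ off a polar set and the fact that $\hat u=\widehat{R_u}$ (both equal to the excessive regularization, by \cite[Proposition~2.3]{HN-mertens}) should pin down $u=R_u$ everywhere, since on the polar set $\{u_2>0\}$ the value of a nearly hyperharmonic function at a point $x$ already satisfies $u(x)=R_u(x)$ whenever $x$ is not finely isolated, and finely isolated points in a polar set can be handled directly from the definition.
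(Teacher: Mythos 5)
Your first half is sound and coincides with the paper's own strategy: decompose $u$ as $u_1\le u\le u_1+u_2$ with $u_1,u_2\in\B^+(X)$ and $\{u_2>0\}$ polar, arrange $\hat u\le u_1$ so that $u_1\in\N$ by \cite[Proposition~2.2]{HN-mertens}, note $u_2\in\N$ since it vanishes outside a polar set, and apply Theorem~\ref{main} to the Borel function $v:=u_1+u_2\in\N$ to get $R_u\le R_v=v$; this settles the claim on $\{u_2=0\}$, where $u=v$. The gap is the final step, on the polar set $\{u_2>0\}$: there you only know $u\le R_u\le v=u_1+u_2$, and nothing you invoke rules out $u<R_u$ at such points. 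Your assertion that ``$u(x)=R_u(x)$ whenever $x$ is not finely isolated'' is exactly (an instance of) the statement being proved, and no argument is offered for it; likewise ``finely isolated points in a polar set can be handled directly from the definition'' is unsupported. The identity $\hat u=\widehat{R_u}$ is also not available: \cite[Proposition~2.3]{HN-mertens} concerns regularizations of increasing limits and sums in $\N$, not reduced functions, so before the theorem is proved you only know $\hat u\le\widehat{R_u}$; and in any case equality of regularizations cannot separate $u$ from $R_u$ on a polar set, since both functions may differ from their common regularization precisely there (a polar set is semipolar, so no contradiction with \cite[VI.5.11]{BH} arises).

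What is missing is one small but decisive idea, which is how the paper closes the argument: make the decomposition depend on the point. Fix $x\in X$ and modify $v_1:=u_1$, $v_2:=u_2$ at the single point $x$ by setting $v_1(x):=u(x)$ and $v_2(x):=0$. Changing a Borel function at one point keeps it Borel, the sandwich $v_1\le u\le v_1+v_2$ and the inequality $\hat u\le v_1$ persist (because $\hat u\le u$), and $\{v_2>0\}$ is still polar. Then $v_1\in\N$ by \cite[Proposition~2.2]{HN-mertens}, hence $v:=v_1+v_2\in\N\cap\B^+(X)$ and $R_v=v$ by Theorem~\ref{main}, whence
\begin{equation*}
R_u(x)\le R_v(x)=v(x)=v_1(x)+v_2(x)=u(x)\le R_u(x).
\end{equation*}
Since $x$ was arbitrary, $u=R_u$. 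This pointwise normalization is exactly what your fixed, $x$-independent decomposition cannot deliver on $\{u_2>0\}$.
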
 

\begin{proof} 
Let us fix $x\in X$. 
There exist $v_1,v_2\in \B^+(X)$ such that $v_1\le u\le v_1+v_2$ and 
the set $\{v_2>0\}$ is polar whence $v_2\in\N$.  Of course, we may assume that $\hat u\le v_1$,
$v_1(x)=u(x)$ and $v_2(x)=0$. By \cite[Proposition 2.2]{HN-mertens}, $v_1\in \N$.
So  $v:=v_1+v_2\in \N$ and  $R_v=v$, by Theorem~\ref{main}. 
Thus $R_u(x)\le R_v(x)=v(x)=  u(x)\le R_u(x)$.
\end{proof}

Using results of \cite[Section 4]{HN-mertens} this 
leads to a characterization of the equality $R_u=u$ for nearly hyperharmonic
$\bup$-measurable functions. 
To this end we  recall   that the
$\sigma$-algebra of all finely Borel subsets of $X$ (that is, the smallest $\sigma$-algebra
 on~$X$ containing all finely open sets) is the smallest $\sigma$-algebra containing~$\B$ and all
semipolar sets; see \cite[Section 5]{HN-mertens}. In particular, $\tilde \B\subset \B^f$.

\begin{theorem}\label{general-2}Let  $u\in\N$  and suppose that $u$ is $\bup$-measurable. 
Then the following statements are equivalent:
\begin{itemize} 
\item[\rm (i)] 
$u=\inf \{w\in \es\colon w\ge u\}$.
\item[\rm (ii)] 
$u$ is finely upper semicontinuous. 
\item[\rm (iii)] 
$u$ is finely Borel measurable.
\item[\rm (iv)] 
$u$ is $\tilde \B$-measurable.
\item[\rm (v)] 
The set $\{\hat u<u\}$ is semipolar. 
\end{itemize} 
\end{theorem}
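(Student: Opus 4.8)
The plan is to establish the single cyclic chain of implications $(i)\Rightarrow(ii)\Rightarrow(iii)\Rightarrow(v)\Rightarrow(iv)\Rightarrow(i)$, which makes all five statements equivalent. The closing link $(iv)\Rightarrow(i)$ is already available: it is precisely Theorem~\ref{tilde-Borel}. Note that for \emph{Borel} measurable $u\in\N$ all five statements hold automatically (by Lemma~\ref{fusc}, Theorem~\ref{general-1} and \cite[VI.5.11]{BH}), so the genuine content of the theorem is the upgrading of the weak measurability hypothesis, and essentially all the difficulty is concentrated in the step $(v)\Rightarrow(iv)$.

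For $(i)\Rightarrow(ii)$ I would use that every $w\in\es$ is finely continuous, so that each set $\{w<c\}$ is finely open; writing $\{u<c\}=\bigcup\{w<c\}$, the union taken over the excessive majorants $w\ge u$ realizing $u=\inf\{w\in\es\colon w\ge u\}$, exhibits $\{u<c\}$ as finely open for every $c\in\real$, which is precisely the statement that $u$ is finely upper semicontinuous. The implication $(ii)\Rightarrow(iii)$ is then immediate, since a finely upper semicontinuous function has all superlevel sets $\{u\ge c\}$ finely closed, hence is finely Borel measurable.

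For $(iii)\Rightarrow(v)$ I would invoke the classical property of the fine topology that a finely Borel measurable function is finely continuous outside a semipolar set $N$ (its set of fine discontinuities is semipolar). At every point of $X\setminus N$ the value of $u$ coincides with its fine lower regularization $\hat u^f$, and $\hat u^f=\hat u\in\es$ for all $u\in\N$. Hence $\{\hat u<u\}\subset N$ is semipolar.

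The hard step is $(v)\Rightarrow(iv)$, for which I would lean on the results of \cite[Section 4]{HN-mertens}. One has $u=\hat u$ off the semipolar set $S:=\{\hat u<u\}$, with $\hat u\in\es$ Borel, so the task is to replace the a~priori only $\bup$-measurable behaviour of $u$ on $S$ by Borel behaviour modulo a polar set. The mechanism is that a universally measurable subset of $S$, being measurable with respect to each of the hitting measures $\hat\ve_x^{T}$ carried by the totally thin pieces $T$ whose countable union is $S$, differs from a Borel set only by a set that is null for all of those measures, that is, by a polar set; read off at the level of $u$ this produces a Borel function $\tilde u\in\N$ with $\{u\ne\tilde u\}$ polar, which is exactly $\tilde\B$-measurability. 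The main obstacle I anticipate is organizing this ``universally measurable $\Rightarrow$ Borel modulo polar'' reduction globally on all of $S$ at once, rather than piece by piece, and controlling the resulting polar exceptional sets uniformly; this is precisely the delicate bookkeeping that the machinery of \cite[Section 4]{HN-mertens} is designed to handle.
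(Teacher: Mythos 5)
Your skeleton (a single cycle (i)\,$\Rightarrow$\,(ii)\,$\Rightarrow$\,(iii)\,$\Rightarrow$\,(v)\,$\Rightarrow$\,(iv)\,$\Rightarrow$\,(i)) is viable, and three of its links are correct and coincide with the paper's proof: (i)\,$\Rightarrow$\,(ii)\,$\Rightarrow$\,(iii) (which the paper dismisses as trivial; your union argument is fine) and (iv)\,$\Rightarrow$\,(i), which is exactly Theorem~\ref{tilde-Borel}. The paper closes the cycle simply by citing \cite[Proposition 5.3 and Corollary 5.4]{HN-mertens} for (iii)\,$\Leftrightarrow$\,(v) and (iii)\,$\Rightarrow$\,(iv). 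Your substitute for (iii)\,$\Rightarrow$\,(v), however, rests on a false lemma: it is \emph{not} true that a finely Borel measurable function is finely continuous outside a semipolar set. Counterexample, in the paper's own model example of space--time Brownian motion on $\reald\times\real$ (where the time coordinate decreases along paths): let $C\subset\real$ be compact with empty interior and positive Lebesgue measure, $F:=\reald\times C$ and $u:=1_F$. Then $u$ is Borel, hence finely Borel measurable; but for every $z\in F$ the open set $(\reald\times\real)\setminus F=\reald\times(\real\setminus C)$ is non-thin at $z$ (the process lies in it at times accumulating at $0$, since $\real\setminus C$ is open and dense), so $F$ is a fine neighborhood of none of its points and $u$ is finely discontinuous at \emph{every} point of $F$. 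Yet $F$ is not semipolar: starting above $\sup C$, the occupation time of $F$ equals the Lebesgue measure of $C$, which is positive, so $F$ is visited at uncountably many times, whereas a semipolar set is visited at only countably many times (classical; compare (\ref{FcapU}), (\ref{sge0})). Note that your argument invokes the hypothesis $u\in\N$ only through $\hat u^f=\hat u$; since the topological claim it rests on is false for general finely Borel functions (even for finely u.s.c.\ ones, as $1_F$ above shows), near-hyperharmonicity must enter (iii)\,$\Rightarrow$\,(v) in an essential way --- which is what \cite[Proposition 5.3]{HN-mertens} does, using $\B^f=\sigma\bigl(\B,\mbox{ semipolar sets}\bigr)$ to write $u$ as a Borel function off a semipolar set and then reducing to the Borel case already settled by Theorem~\ref{main}.

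Your mechanism for (v)\,$\Rightarrow$\,(iv) is also incorrect as stated: a set which is null for all the single hitting measures $\hat\ve_x^T$ of the totally thin pieces $T$ need not be polar, because the process may always enter $T$ first through $T\setminus B$ and only later visit $B$. In the same space--time setting, take the totally thin set $T:=\reald\times\bigl(\{0\}\cup\{1/n\colon n\in\nat\}\bigr)$ and $B:=\reald\times\{0\}\subset T$; from any starting point the first hit of $T$ at a strictly positive time occurs at some level $1/n$, never in $B$, so $\hat\ve_x^T(B)=0$ for every $x$, although $B$ is far from polar. This is precisely the trap that Remark~\ref{erratum} flags: the erroneous \cite[Lemma 1.3]{S-negligible}. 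The correct tool is \cite[Theorem 1.5]{S-negligible} --- a single measure $\mu$ with $\mu^\ast(B)>0$ for every non-polar subset $B$ of the semipolar set --- whose proof requires the \emph{iterated} balayage measures $\mu_k:=\hat\mu_{k-1}^F$ sketched in Remark~\ref{erratum}. With that result in hand, your squeeze argument (trap $u$ between Borel functions agreeing $\mu$-a.e.\ and conclude that the exceptional subset of $\{\hat u<u\}$ is polar) does deliver (v)\,$\Rightarrow$\,(iv). In summary: the architecture is fine and the outer links are right, but both nontrivial middle links currently rest on false statements and need the cited machinery of \cite{HN-mertens} and \cite{S-negligible}.
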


\begin{proof} Trivially,  (i)\,$\Rightarrow$\,(ii)\,$\Rightarrow$\,(iii).
Moreover,  (iii)\,$\Leftrightarrow$\,(v) and (iii)\,$\Rightarrow$\,(iv), 
by \cite[Proposition 5.3 and Corollary 5.4]{HN-mertens}. By Theorem \ref{tilde-Borel},
(iv)\,$\Rightarrow$\,(i).
\end{proof} 

Clearly, previous statements on reduced functions $R_\vp$ can now  
be  extended to functions $\vp\ge 0$ on $X$ which are only supposed to be  $\B^f\cap \bup$-measurable.

\begin{remark}\label{erratum}{\rm
The result \cite[Corollary 5.4]{HN-mertens}    
relies on \cite[Proposition 5.2]{HN-mertens} 
the proof of which uses \cite[Theorem 1.5]{S-negligible} stating that, given a semipolar set $S$,
there exists a~measure $\mu$ on $X$ such that $\mu^\ast(B)>0$ for every non-polar subset $B$
of $S$. 
This is correct; its proof, however, is not, since \cite[Lemma 1.3]{S-negligible} is wrong.

Assuming without loss of generality that $S$ is the union of totally thin Borel sets~$F_n$, $n\in\nat$,
we many obtain a valid proof  exhausting each $F_n$ by sets  
 \begin{equation*} 
F_{n,m}:=F_n\cap K_m\cap \{ \hat P_{F_n}q<\eta_m q\} , \qquad  m\in\nat,
\end{equation*}  where $q$ is a continuous strict potential on $X$,
 $(K_m)$ is a sequence of compacts  in $X$ and $\eta_m\in (0,1)$ such that $K_m\uparrow X$ and $\eta_m\uparrow 1$
as $m\to\infty$. 

Indeed, let us fix $x\in X$ and $m,n\in \nat$. Let $F:=F_{n,m}$, $\eta:=\eta_{n,m}$. We recursively define
measures $\mu_k$  on $F$  taking $\mu_0:=\vx$ and 
\begin{equation*} 
\mu_k:=\hat\mu_{k-1} ^F:=\int \hat\ve_y^F\,d\mu_{k-1}(y), \qquad k\in\nat.
\end{equation*} 
Then $\int q\,d\mu_k=\int \hat P_Fq\,d\mu_{k-1}\le \eta \int q \, d\mu_{k-1}$, and hence 
$ \inf q(F)\cdot \mu_k(F) \le \eta^k q(x)$  for every $k\in\nat$.
So $\lim_{k\to\infty} \mu_k=0$ which leads to a proof of \cite[Theorem 1.5]{S-negligible} not using 
 \cite[Lemma 1.3]{S-negligible}.
}
\end{remark}

\section{Application of our method to right processes}\label{standard}

 It should be clear to the experts that our approach works as well  
for a general   right process $\mathfrak X$  on a Radon space $X$
provided we assume that the associated potential kernel   is proper.
It would then be convenient to define~$\N$ to be the set of all functions
 $u\colon X\to [0,\infty]$ such that $\int^\ast u\circ X_{D_K}\,dP^x\le u(x)$
 for all $x\in X$ and compacts $K$ not containing $x$. By a result of Dynkin
 (see \cite[Theoem II.5.1]{BG}), nearly Borel measurable
functions in $\N$ are supermedian with respect to the resolvent $(V_\lambda)$ associated
with $\mathfrak X$  and, essentially, we only have to replace the lower semicontinuous 
regularization $\hat u$ of $u$ (which for a nearly hyperharmonic function  in our setting of a~nice 
Hunt process is the greatest excessive minorant of~$u$)
by the excessive regularization  $\tilde u:=\lim_{\lambda\to \infty } \lambda V_\lambda  u$.

{\small \noindent 
Wolfhard Hansen,
Fakult\"at f\"ur Mathematik,
Universit\"at Bielefeld,
33501 Bielefeld, Germany, e-mail:
 hansen$@$math.uni-bielefeld.de}\\
{\small \noindent Ivan Netuka,
Charles University,
Faculty of Mathematics and Physics,
Mathematical Institute,
 Sokolovsk\'a 83,
 186 75 Praha 8, Czech Republic, email:
netuka@karlin.mff.cuni.cz}

\end{document}